\documentclass[reqno,a4paper]{amsart}
\usepackage[utf8]{inputenc}
\usepackage[T1]{fontenc}
\usepackage[british]{babel}
\usepackage{amsfonts, amssymb, amsmath}
\usepackage[all]{xy}
\usepackage[colorlinks, citecolor=blue]{hyperref}
\usepackage[noabbrev, capitalise,nameinlink]{cleveref} 
\usepackage{mathbbol}
\usepackage{calrsfs}
\usepackage{mathabx}
\usepackage{amsfonts}
\usepackage{amssymb}
\usepackage{amsthm}
\usepackage{amsmath}
\allowdisplaybreaks
\usepackage{xcolor}
\definecolor{l-gray}{gray}{0.7}

\theoremstyle{plain}
\newtheorem{theorem}{Theorem}[section]
\newtheorem{proposition}[theorem]{Proposition}
\AddToHook{env/proposition/begin}{\crefalias{theorem}{proposition}}
\newtheorem{lemma}[theorem]{Lemma}
\AddToHook{env/lemma/begin}{\crefalias{theorem}{lemma}}
\newtheorem{corollary}[theorem]{Corollary}
\AddToHook{env/corollary/begin}{\crefalias{theorem}{corollary}}

\theoremstyle{definition}
\newtheorem{definition}[theorem]{Definition}
\AddToHook{env/definition/begin}{\crefalias{theorem}{definition}}

\AddToHook{env/definitions/begin}{\crefalias{theorem}{definitions}}

\theoremstyle{remark}
\newtheorem{remark}[theorem]{Remark}
\AddToHook{env/remark/begin}{\crefalias{theorem}{remark}}

\AddToHook{env/remarks/begin}{\crefalias{theorem}{remarks}}
\newtheorem{example}[theorem]{Example}
\AddToHook{env/example/begin}{\crefalias{theorem}{example}}

\numberwithin{equation}{section}
\numberwithin{figure}{section}

\newenvironment{tfae}
{
	\begin{enumerate}}
	{\end{enumerate}}

\newcommand{\noproof}{\hfill \qed}

\newcommand{\univ}[2]{(#1, #2)}

\newcommand{\defn}{\textbf}

\DeclareMathOperator{\Ker}{Ker}

\newcommand{\C}{\ensuremath{\mathcal{C}}}
\newcommand{\D}{\ensuremath{\mathcal{D}}}
\newcommand{\E}{\ensuremath{\mathcal{E}}}

\newcommand{\V}{\mathcal{V}}

\newcommand{\Arr}{\ensuremath{\mathsf{Arr}}}

\newcommand{\XMod}{\ensuremath{\mathsf{XMod}}}

\newcommand{\Set}{\mathsf{Set}}

\newcommand{\Ch}{\ensuremath{\mathsf{Ch}}}

\renewcommand{\P}{{\rm (P)}}

\newcommand{\G}{\mathbb{G}}
\newcommand{\s}{\ensuremath{\mathsf{s}}}

\newcommand{\NN}{\mathbb{N}}

\newcommand{\X}{\mathcal{X}}

\newcommand{\Le}{\mathcal{L}}
\newcommand{\Left}{\mathrm{L}}

\DeclareMathOperator{\Coker}{Coker}
\DeclareMathOperator{\Coeq}{Coeq}

\DeclareMathOperator{\Homology}{H}
\renewcommand{\H}{\Homology}

\DeclareMathOperator{\N}{N}

\newdir{>>}{{}*!/3.5pt/:(1,-.2)@^{>}*!/3.5pt/:(1,+.2)@_{>}*!/7pt/:(1,-.2)@^{>}*!/7pt/:(1,+.2)@_{>}} 
\newdir{ >>}{{}*!/8pt/@{|}*!/3.5pt/:(1,-.2)@^{>}*!/3.5pt/:(1,+.2)@_{>}}
\newdir{ |>}{{}*!/-3.5pt/@{|}*!/-8pt/:(1,-.2)@^{>}*!/-8pt/:(1,+.2)@_{>}} 
\newdir{ >}{{}*!/-8pt/@{>}}
\newdir{>}{{}*:(1,-.2)@^{>}*:(1,+.2)@_{>}}
\newdir{<}{{}*:(1,+.2)@^{<}*:(1,-.2)@_{<}}

\def\pullback{
	\ar@{-}[]+R+<6pt,-1pt>;[]+RD+<6pt,-6pt>%
	\ar@{-}[]+D+<1pt,-6pt>;[]+RD+<6pt,-6pt>}

\def\reversepullback{ 
	\ar@{-}[]+R+<6pt,-1pt>;[]+RU+<6pt,6pt>%
	\ar@{-}[]+U+<-1pt,6pt>;[]+RU+<6pt,6pt>}

\def\dottedpullback{%
	\ar@{.}[]+R+<6pt,-1pt>;[]+RD+<6pt,-6pt>%
	\ar@{.}[]+D+<1pt,-6pt>;[]+RD+<6pt,-6pt>}

\def\pushout{%
	\ar@{-}[]+L+<-6pt,1pt>;[]+LU+<-6pt,6pt>%
	\ar@{-}[]+U+<-1pt,6pt>;[]+LU+<-6pt,6pt>}

\usepackage{tikz-cd}

\DeclareMathOperator{\coker}{coker}

\usepackage{hyperref}
\hypersetup{
	colorlinks=true,       
	linkcolor=blue,          
	citecolor=magenta,        
	filecolor=green,      
	urlcolor=cyan           
}

\allowdisplaybreaks

\begin{document}
\emergencystretch 3em
	
\title{The first \v Cech derived functor}
	
\author{Maxime Culot}
\author{Tomas Everaert}
	
\email{maxime.culot@uclouvain.be}
\email{tomas.everaert@vub.be}
	
\address[Maxime Culot]{Institut de Recherche en Math\'ematique et Physique, Universit\'e catholique de Louvain, che\-min du cyclotron~2 bte~L7.01.02, B--1348 Louvain-la-Neuve, Belgium}
	
\address[Tomas Everaert]{Mathematics \& Data Science, Vrije Universiteit Brussel, Pleinlaan 2, B--1050 Brussel, Belgium}
	
	
\begin{abstract}
	Let $\C$ be a semi-abelian category, $\D$ a Birkhoff subcategory of $\C$, and $I$ the associated reflector. For $C\in \C$, we establish an isomorphism between $\H_1(\N(I(\check{C}(f,C))))$ (where $\N$ is the Moore normalization functor and $\check{C}(f,C)$ is a \v Cech resolution associated to $C$) and $\H_1(N(I(\G(C))))$ (where $\G(C)$ is the canonical $\G$-simplicial resolution of $C$ for $\G$ the comonad derived from an adjunction $F\colon \X\to \C\dashv U\colon \C\to \X$ satisfying classical conditions introduced by Quillen). The former corresponds to the Hopf formula for the second homology object associated to $I$, while the latter represents the first \v Cech derived functor of $I$. Specifically, we show that both can be expressed by a single unified formula.
		
	Furthermore, we can also prove that, under certain assumptions, the first \v Cech derived functor of $I$ is naturally isomorphic to the first non-additive derived functor of $I$ (in the sense of Culot--Renaud--Van~der~Linden).
\end{abstract}
	
\subjclass[2020]{18E13, 18G50, 18G10, 20J05}
\keywords{Non-additive derived functor, semi-abelian category, \v Cech derived functor, Hopf formula}
	
\maketitle
	
\tableofcontents

\section*{Introduction}
	
Abelian categories have historically served as the foundational framework for defining homotopical structures and computing the homology of positively graded chain complexes. Notably, the Dold–Kan–Puppe equivalence~\cite{DoldcorrespondenceRMod,Dold-Puppe} establishes that any positively graded chain complex can be interpreted as the image of a simplicial object under the Moore normalization functor. Conversely, a simplicial object can be systematically constructed from any given positively graded chain complex, as demonstrated in textbooks such as~\cite{Weibel}. 
	
The lack of a similar equivalence has long posed challenges for extending such methods to non-additive categories, exemplified by the category of groups. Here, simplicial methods provide a robust alternative to chain complexes. For instance, Hopf’s formula for groups (for the case $n=2$ see~\cite{Hopf} and for its generalizations to higher orders~\cite{Brown-Ellis}) demonstrates their applicability. By using projective simplicial resolutions~\cite{Tierney-Vogel2}, such methods extend further to any semi-abelian category~\cite{EGVdL}. Concretely, computing the homology of the Moore normalization---which, in a Barr‑exact homological category, yields a proper chain complex~\cite{EverVdL2}---produces the expected results. Specifically, this homology captures exactness in cases where the homology vanishes, as detailed in~\cite{VdLinden:Doc}. Furthermore, the simplicial techniques enable the computation of derived functors, such as the Barr--Beck derived functors, which are closely related to the Hopf-type formulas obtained in~\cite{EGVdL}. Those formulas are Baer invariants~\cite{EverVdL1,EverVdL2}, meaning they are independent of the specific projective presentation chosen for a given object. In addition, under certain assumptions on the underlying categories and on the chosen functor, we can also prove that $(n+1)$-th Barr--Beck derived functor is naturally isomorphic to the $n$-th non-additive left derived functor (in the sense of~\cite{MC-FR-TVdL}).
	
In this paper, we establish a connection between the second homology of the Barr‑Beck derived functors associated with the reflector $I$ from a semi-abelian category $\C$ onto a selected Birkhoff subcategory $\D$ and the first \v Cech homology. As far as we know, this \v Cech homology has been rigorously defined, up to now, only in the context of groups~\cite{Inasaridze75,IH,Pirashvili-1979}.
	
We begin by revisiting the concept of \v Cech resolutions, as detailed in \cref{def-Cech resolution and homology}, along with the associated derived functors of $I$.  From this framework, we derive an explicit expression for the first derived functor (see \cref{proposition:Cech homology H2}). Subsequently, we compare this result with $\H_2(C,I)_\G$ (see \cref{sec - Barr beck approach H2}), where $\G$ is the comonad on $\C$ derived from an adjunction $F\dashv U$ satisfying standard assumptions (see \cref{sec : assumptions} for further details). This comparison is achieved through the application of standard simplicial techniques, as outlined in \cref{prop: Barr-Beck homology H2}.
	
Thus, we establish the desired isomorphism $\H_{2}(C,I)_{\G}\cong \Le_1(I)(C)$. Concerning the $\H_{n}(C,I)_{\G}$, for $n\geq 3$, we do not have, for the moment, a clear link with the homology of the \v Cech‑derived functor of $I$. Nevertheless, as highlighted in~\cite{EGVdL}, $\H_{2}(C,I)_{\G}$ plays a pivotal role as the foundational component for deriving—via induction—a general formula for $\H_{n}(C,I)_{\G}$ with $n\geq 2$. Thus, this work aims to provide the first critical step toward a broader result that unifies these two approaches.
	
\section{Context and assumptions}\label{sec : assumptions}
	
Throughout this text, $\C$ will denote a semi-abelian category with $\D$ a Birkhoff subcategory~\cite{Janelidze-Kelly} of $\C$ where $I$ denotes the associated reflector. Furthermore, we will consider a comonad $\G$
\begin{equation*}
	\G=(G\colon \C\to \C, \delta\colon G\implies G^2,\varepsilon\colon G\implies 1_\C)\label{eq - def of a comonad}
\end{equation*}
on $\C$ satisfying the following conditions
\begin{enumerate}
	\item $\varepsilon_C\colon G(C)\to C$ is a regular epimorphism for all $C\in \C$;
	\item for all $C\in \C$, we have that $G(C)$ is a projective object in $\C$ (with respect to regular epimorphisms).
\end{enumerate}
In other words, we require that for all $C\in \C$, $\varepsilon_C\colon G(C)\to C$ determines a \defn{projective presentation of $C$}.

To be able to deduce from these conditions some additional ones, we will, moreover, require $\G$ to be induced by an adjunction 
\begin{equation}
\xymatrix{
	\C \ar@<-1.1ex>[r]_-{U}^-{\perp}& \X \ar@<-1.1ex>[l]_-{F}
}.\tag{D}\label{eq - adjunction (D)}
\end{equation}
in the sense that $G=FU$ and $\epsilon$ is the counit of this adjunction. The above conditions then translate into the ones introduced by Quillen in~\cite{Quillen}. 

Supposing $\G$ is indeed induced by an adjunction   satisfying Quillen's conditions, then we also have the following: if $f\colon C\to C'$ is a regular epimorphism (or a $1$-fold extension), then the commutative square
\begin{equation}\label{eq - diagram we want to be 2 ext}
\xymatrix{
	G(C)\ar@<0.5ex>@{-{ >>}}[r]^-{G(f)}\ar@{-{ >>}}[d]_-{\varepsilon_C} & G(C')\ar@{{ >}->}@<.5ex>[l] \ar@{-{ >>}}[d]^-{\varepsilon_{C'}}\\
	C\ar@{-{ >>}}[r]_-{f}&C'
}
\end{equation} 
is a regular pushout~\cite{Bourn2003} (or a $2$-fold extension), i.e.\ a commutative square of $1$-fold extensions where, in addition, the comparison morphism into the pullback is also a $1$-fold extension,---this is a specific case of a result from~\cite{EGVdL}, which we will now recall. For this, note that~\eqref{eq - adjunction (D)} induces, for every $n\geq 0$, an adjunction
\begin{equation}
	\xymatrix{
		\Arr^n(\C) \ar@<-1.1ex>[r]_-{U_n}^-{\perp}& \Arr^n(\X) \ar@<-1.1ex>[l]_-{F_n}
	}.\tag{E}\label{eq - adjunction (E)}
\end{equation}
which is defined degree-wise, where $\Arr^n(\C)=\Arr(\Arr^{n-1}(\C))$ with $\Arr^1(\C)=\Arr(\C)$, and where $\Arr^n(\X)$ is defined analogously. Hence, an arrow in $\Arr^1(\C)$ is a commutative square and an arrow in $\Arr^2(\C)$ is a commutative cube. Then, for each $n$, the induced adjunction~\eqref{eq - adjunction (E)} induces a comonad $\G_n=(G_n= F_n\circ U_n, \delta_n, \varepsilon_n)$ on $\Arr^n(\C)$. Finally, we already know what is a $1$-fold extension and a $2$-fold extension. Inductively, for $n\geq 2$, a $n$-fold extension is a commutative square in $\Arr^{n-1}(\C)$ where all arrows and the factorization to the pullback are $(n-1)$-fold extension.

Proposition 5.6. in~\cite{EGVdL} now states the following: for any $n$-extension $C$, we have that $\varepsilon_{n,C}\colon G_n(C)\to C$ is a $(n+1)$-fold extension. In particular, $G_n(C)$ is a $n$-extension as soon as $C$ is a $n$-extension\label{G preserves extension}. It follows that the commutative square~\eqref{eq - diagram we want to be 2 ext} is indeed a regular pushout when $f$ is a regular epimorphism and, in particular, if we choose $f=\epsilon_C$, the square
\[
\xymatrix{
	G^2(C)\ar@<0.5ex>@{-{ >>}}[r]^-{G(\varepsilon_C)}\ar@{-{ >>}}[d]_-{\varepsilon_{G(C)}} & G(C) \ar@{-{ >>}}[d]^-{\varepsilon_C}\ar@{{ >}->}@<.5ex>[l]\\
	G(C)\ar@{-{ >>}}[r]_-{\varepsilon_C}&C
}
\]
is a regular pushout as well. 

\begin{remark}
	It is natural for us to consider the above conditions on $\G$ (or on the adjunction $U\dashv F$) since they proved crucial in~\cite{EGVdL} for establishing higher Hopf formulae---which are fundamental to the present article. Note that they are certainly satisfied whenever $\C$ is a semi-abelian category that is monadic over $\X=\Set$ and $\G$ is the induced comonad on $\C$: condition (1) then follows from Beck's monadicity Theorem (see for instance~\cite[Theorem 4.3.5.]{Borceux:Cats2}) and condition (2) from the fact that $F(X)$ is a free object and therefore projective. Hence, in particular, semi-abelian varieties (in the sense of universal algebra, as in~\cite{Bourn-Janelidze}) constitute a class of examples. Note that in this case, the Birkhoff subcategories $\D$ of $\C$ are exactly the subvarieties of $\C$.
\end{remark}

\begin{remark}[Regular pushout and left-adjoint]\label{rem - regular pushout are preserved by left-adjoint}
	Since a regular pushout in a semi-abelian category is simply a pushout of a regular epimorphism by a regular epimorphism~\cite[Theorem 5.7.]{Carboni-Kelly-Pedicchio}, the reflector $I\colon \C\to \D$---being colimit preserving---preserves them. This will be important in what follows.
\end{remark}
	
\section{The main result}\label{sec:setting and Cech homology}
With the context and assumptions established in the previous section, we are now ready to introduce our main result: 
\begin{theorem}\label{thm - theorem Barr Beck n=2 same as Cech n=1}
Let $C\in \C$, then 
\[
	\H_{2}(C,I)_{\G}\cong \Le_1(I)(C)
\]
where $\G$ is the comonad on $\C$ derived from the adjunction~\eqref{eq - adjunction (D)}.
\end{theorem}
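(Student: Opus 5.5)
The strategy is to express both sides by one Hopf-type formula and compare. Fix $C\in\C$ and a projective presentation $f\colon P\to C$ (for instance $f=\varepsilon_C\colon G(C)\to C$). Write $K=\Ker(f)$ and $[-,-]_\D$ for the relative commutator associated with the Birkhoff subcategory $\D$, so that $I(X)=X/[X,X]_\D$. The expected common value is
\[
\frac{K\cap [P,P]_\D}{[K,P]_\D},
\]
where $[K,P]_\D$ denotes the kernel of the restriction of $P\to I_1(f)$ to $K$, with $I_1$ the reflector of extensions onto central extensions. This is the Everaert–Gran–Van der Linden Hopf formula from~\cite{EGVdL}.

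First, I would compute $\Le_1(I)(C)$. It is defined as $\H_1(\N(I(\check{C}(f,C))))$, where $\check C(f,C)$ is the kernel-pair (\v Cech) simplicial object of $f$: in degree $n$ it is the $(n+1)$-fold iterated kernel pair $P\times_C\cdots\times_C P$, with the projections as faces and the diagonals as degeneracies.
\begin{itemize}
\item Since $I$ preserves regular epimorphisms, $I(\check C(f,C))$ is a simplicial object in the semi-abelian category $\D$, and its normalization is a proper chain complex by~\cite{EverVdL2}.
\item So $\H_1$ is the quotient of $\Ker(I(d_0))\cap \Ker(I(d_1))$ in $I(P\times_C P)$ by the image of the degree-$2$ normal part.
\item Write $R=P\times_C P$, with $\Ker(d_0\colon R\to P)\cong K$. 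Using that $R\to P$ is split, $I$ preserves the split extension up to a commutator correction. I would identify $\N_1 I(R)$ with $K/[K,P]_\D$ modulo the relevant relative commutator, and the cycles in $\N_1$ with $(K\cap[P,P]_\D)/[K,P]_\D$.
\item Finally, the image of $\N_2$ should be trivial modulo $[K,P]_\D$. Here I would use that the degree-$2$ squares of $\check C$ are regular pushouts (pullback squares of split epis along regular epis), which $I$ preserves by \cref{rem - regular pushout are preserved by left-adjoint}.
\end{itemize}
This is the content of the first computation, \cref{proposition:Cech homology H2}. It also shows the result is independent of the chosen presentation, since the formula is a Baer invariant.

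Second, I would compute $\H_2(C,I)_\G=\H_1(\N(I(\G(C))))$ for the canonical comonadic resolution, with $\G(C)_n=G^{n+1}(C)$.
\begin{itemize}
\item The key input is from \cref{sec : assumptions}: the square formed by $G^2(C)$, $G(C)$, $G(C)$, $C$ is a regular pushout, i.e.\ a double extension, and more generally $\varepsilon_{n}$ yields higher extensions by Proposition 5.6 of~\cite{EGVdL}.
\item In low degrees, $\N_1$ of $\G(C)$ maps by a regular epimorphism onto $\Ker(\varepsilon_C)=K$, with $P=G(C)$. Because of the double-extension property, the truncated simplicial object maps onto the \v Cech resolution of $\varepsilon_C$ via the comparison $G^2(C)\to G(C)\times_C G(C)$, and this comparison is itself a regular epimorphism.
\item Applying $I$ and taking $\H_1$, I would show the induced map on $\H_1$ is an isomorphism. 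It is surjective since degree-$1$ comparisons are regular epis. For injectivity, the kernel of the comparison in degree $1$ must be absorbed by boundaries from degree $2$, using the higher-extension property of $G^3(C)\to$ (the degree-$2$ \v Cech object).
\end{itemize}
This gives \cref{prop: Barr-Beck homology H2}, with the same formula $\frac{K\cap[P,P]_\D}{[K,P]_\D}$.

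The main obstacle is the injectivity step in the comparison: controlling the kernel of $I(G^2(C))\to I(G(C)\times_C G(C))$ restricted to normalized cycles. My first attempt would avoid a direct simplicial homotopy argument. Instead I would compute both $\H_1$'s intrinsically as the same quotient of $K$, using naturality of $[-,-]_\D$ under regular epis (Birkhoff commutators are preserved by regular images). Combining the two propositions then yields the natural isomorphism $\H_2(C,I)_\G\cong\Le_1(I)(C)$.
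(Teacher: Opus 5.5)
Your strategy, identifying both sides with the Hopf quotient $\frac{K\cap[P,P]_\D}{[K,P]_\D}$, is a different route from the paper's, but two steps are not actually established.

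\textbf{The \v Cech side.} Your argument needs $d_2$ to vanish on $\N_2=\Ker I(\partial^2_1)\cap\Ker I(\partial^2_2)$, and the reason you give does not deliver this. The pair $(\partial^2_1,\partial^2_2)$ exhibits $T=P\times_CP\times_CP$ as the pullback of the split epimorphism $\partial^1_1\colon R=P\times_CP\to P$ along itself. Preservation of regular pushouts by $I$ therefore only tells you that the comparison $\gamma\colon I(T)\to I(R)\times_{I(P)}I(R)$ is a regular epimorphism. But $\N_2$ is exactly $\Ker\gamma$, so this says nothing about $I(\partial^2_0)$ on $\N_2$. What you need is that $\gamma$ is also a monomorphism, and this is true:
\begin{enumerate}
\item Write $V=\Ker\eta$. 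The kernel of $T\to I(R)\times_{I(P)}I(R)$ is $V(R)\times_{V(P)}V(R)$, itself a pullback of split epimorphisms.
\item Its canonical sections are restrictions of the canonical sections $R\to T$, and hence land in $V(T)$.
\item In a Mal'tsev category such canonical sections are jointly strongly epic, so $V(R)\times_{V(P)}V(R)\leq V(T)$.
\item Hence $\Ker\gamma=0$ and $\N_2=0$.
\end{enumerate}
Alternatively, $I(R)$ is a regular quotient of the groupoid $R$, hence a groupoid, and $I(\partial^2_0)$ is its division map composed with $(I(\partial^2_1),I(\partial^2_2))$. Only with this step is $\Le_1(I)(C)$ the object of $1$-cycles, which your computation (via $[K,P]_\D=\partial^1_0(\Ker\partial^1_1\cap[R,R]_\D)$) then identifies with the Hopf quotient.

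\textbf{The Barr--Beck side.} The comparison $\G(C)\to\check C(\varepsilon_C,C)$ gives surjectivity on $\H_1$. Injectivity, which is the whole content, is left open. ``Naturality of $[-,-]_\D$ under regular epimorphisms'' cannot supply it: identifying $\H_1(\N(I(\G(C))))$ with the Hopf quotient \emph{is} the Hopf formula of~\cite{EverVdL2,EGVdL}. That is a theorem requiring genuinely homological input, such as a long exact homology sequence and the vanishing of $\H_1(I(\G(P)))$ for projective $P$. So cite it and drop the comparison. With that citation and the fix above, your argument is complete.

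\textbf{Comparison with the paper.} The paper never passes through commutators or the Hopf formula. It applies $\G$ to the \v Cech cube of the same presentation $f$ and uses the long exact sequence of $0\to\Ker I(\G(f))\to I(\G(P))\to I(\G(C))\to 0$. It computes $\H_0(\Ker I(\G(f)))$ as a coequalizer, using the $3$-fold extension property and lifted splittings. It finishes with the $3\times3$ lemma, landing directly on cycles modulo boundaries of $\N(I(\check C(f,C)))$. Your repaired route is shorter given the Hopf formula, and it shows in addition that the boundary term in the paper's common formula is zero. The paper's route is self-contained at the simplicial level.
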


It is necessary to define precisely what each side of the equation refers to. We begin with the left-hand side: this is the \defn{second Barr--Beck homology object} of $C$, which is constructed using the comonad $\G$. To see how, first recall that given such a comonad $\G=(G,\delta, \varepsilon)$ on $\C$, one can construct, for any object $C\in\C$, an augmented simplicial object, denoted $\G(C)$:
\[
\xymatrix{
	G^{n+1}(C)\ar@<1.5 ex>[r]^-{\partial_i \coloneq G^i(\varepsilon_{G^{n-i}(C)})}  & G^n(C)\ar@<1.5ex>[l]^-{\sigma_i\coloneq G^i(\delta_{G^{n-1-i}(C)})} & \cdots  &G^2(C)\ar@<3ex>[r]^-{G(\varepsilon_C)}\ar@<-3ex>[r]_-{\varepsilon_{G(C)}}&G(C)\ar[r]^-{\varepsilon_C}\ar[l]|{\delta_C}& C
}.
\]
The simplicial identities are straightforward consequences of the comonad axioms: $\varepsilon_{G(C)}\delta_C=1_C=G(\varepsilon_C)\delta_C$ and $\delta_{G(C)}\delta_C=G(\delta_C)\delta_C$. In addition, if $\G$ is derived from an adjunction satisfying Quillen's conditions recalled in \cref{sec : assumptions}, the simplicial object turns out to be a projective simplicial resolution in the sense of~Tierney and Vogel~\cite{Tierney-Vogel2}. In this situation, we call $\G(C)$ the \defn{canonical $\G$-simplicial resolution of $C$}. To define Barr--Beck homology, we also need the following concept:

\begin{definition}\label{def - Moore normalisation functor}
	Let $\mathbb{A}$ be a (semi-)simplicial object in a pointed category with pullbacks $\C$. The \defn{Moore chain complex} or \defn{normalized chain complex} $\N(\mathbb{A})$ is the chain complex defined by $\N_{n}(\mathbb{A})=0$ for $n<0$, $\N_0(\mathbb{A})=A_0$ and
	\[
	\N_{n}(\mathbb{A})\coloneq \Ker((\partial_{l}^\mathbb{A})_{1\leq l \leq n}\colon A_n \to A_{n-1}^n)=\bigcap_{l=1}^{n}\Ker(\partial_{l}^\mathbb{A}\colon A_n \to A_{n-1})
	\]
	when $n\geq 1$. We denote the canonical inclusion $\kappa_n^\mathbb{A}\colon \N_n(\mathbb{A})\to A_n$. The boundary morphism $d^{\N(\mathbb{A})}_n\colon \N_{n}(\mathbb{A})\to \N_{n-1}(\mathbb{A})$ is defined by restricting $\partial_{0}^\mathbb{A}\colon A_n\to A_{n-1}$ when $n\geq 1$, and is chosen to be zero when $n\leq 0$. This defines a functor $\N$ from $\s(\C)$ (the category of simplicial objects) to $\Ch^+(\C)$ (the category of positively graded chain complexes).		
	When $\mathbb{A}$ is a (quasi-)simplicial object in a pointed exact Malt'sev category---in particular in a semi-abelian category---$\N(\mathbb{A})$ defines a \emph{proper} chain complex~\cite{EverVdL2}.
\end{definition}
	
For any $C\in\C$, and any $n\geq 1$, the $n+1$-st Barr--Beck homology object of $C$ is then defined as follows:
\[
	\H_{n+1}(C,I)_\G\coloneq  \H_n(\N(I(\G(C))))
\]
where the dimension shift is there for historical reasons (see, for instance, \cite{EverVdL2, JuliaThesis} for further details). It can be shown that under the assumptions of~\cref{sec : assumptions}, the above expression is independent (up to isomorphism) of the chosen comonad on $\C$~\cite[Theorem 8.1.]{EGVdL}.

Now we turn our attention to the right-hand side of~\cref{thm - theorem Barr Beck n=2 same as Cech n=1}. For this, we need the concept of a \emph{\v Cech resolution of $C\in \C$}: let $C\in \C$ and let $f\colon P\to C$ be a projective presentation of $C$ (i.e.\ $f$ is a regular epimorphism and $P$ is a projective object). Consider the cube
\begin{equation}
	\vcenter{\xymatrix{
	P\times_C P\times_C P\ar@{-{ >>}}[rr]^-{\partial^2_2}\ar@{-{ >>}}[dd]_-{\partial^2_1} \ar@{-{ >>}}[dr]^-{\partial^2_0} &&P\times_C P\ar@{-{ >>}}[dr]^-{\partial^1_0} \ar@{-{ >>}}[dd]_-(0.2){\partial^1_1}|\hole& \\
	&P\times_C P\ar@{-{ >>}}[rr]^-(0.2){\partial^1_1}\ar@{-{ >>}}[dd]_-(0.2){\partial^1_0}&& P\ar@{-{ >>}}[dd]^-{f}\\
	P\times_C P\ar@{-{ >>}}[rr]^-(0.2){\partial^1_1}|(0.55)\hole \ar@{-{ >>}}[rd]_-{\partial^1_0} &&P\ar@{-{ >>}}[dr]^-{f}& \\
	&P\ar@{-{ >>}}[rr]_-{f}&&C
	}}\label{fig - definition of qi's}
\end{equation}
in $\C$ where all the faces are pullbacks. In particular, we have that
\begin{itemize}
	\item $(\partial^1_0,\partial^1_1)$ is the kernel pair of $f$;
	\item $(\partial^2_0,\partial^2_1)$ is the kernel pair of $\partial^1_0$;
	\item $(\partial^2_1,\partial^2_2)$ is the kernel pair of $\partial^1_1$.
\end{itemize}
By construction, the cube is a $3$-fold extension: the front square is a double extension as a pullback of the extension $f$ along itself, and the cube is a pullback (in the category $\Arr(\C)$ of morphisms of $\C$) of this double extension along itself.

Note that the pullbacks are in fact kernel pairs and that the $(\partial^2_0,\partial^2_1,\partial^2_2)$ is the \emph{simplicial kernel} of $(\partial^1_0,\partial^1_1)$. Hence, each step of the above construction can be obtained as the \emph{simplicial kernels} of the previous arrows and so on:
\begin{definition}~\cite{Tierney-Vogel2}
	Let $\left(f_i\colon X\to Y\right)_{0\leq i\leq n}$ be a sequence of $n+1$ morphisms in a category $\C$. A \defn{simplicial kernel} of $(f_i)_{0\leq i \leq n}$ is a sequence $\left(k_i\colon K\to X\right)_{0\leq i\leq n+1}$ of $n+2$ morphisms in $\C$ such that $f_ik_j=f_{j-1}k_i$ for $i<j$ and which is universal with respect to this property.
\end{definition}
Hence, iterating simplicial kernels, in a category with binary pullbacks, on the family of arrows we obtain at the previous step leads to:
\begin{definition}\cite{Donadze-Inassaridze-Porter,IH,Pirashvili-1979}\label{def-Cech resolution and homology}
A \defn{\v Cech resolution of $C$}, denoted $\check C(f,C)$, is the augmented simplicial object
\[
	\xymatrix{
			\cdots\ar@<2.25ex>[r]\ar@<-2.25ex>[r]\ar@<0.75ex>[r]\ar@<-0.75ex>[r]& P\times_C P \times_C P \ar@<1.5ex>[r]^-{\partial^2_0}\ar@<-1.5ex>[r]_-{\partial^2_2}\ar[r]|-{\partial^2_1}& P\times_C P\ar@<0.75ex>[r]^-{\partial^1_0}\ar@<-0.75ex>[r]_-{\partial^1_1}& P\ar[r]^-{f}&C
	}
\]
where $f\colon P\to C$ is a projective presentation of $C$. Hence $\check C(f,C)_0=P$, $\check C(f,C)_1=P\times_C P$ and, for $n\geq 2$, 
\[
	\check C(f,C)_n\coloneq \underbrace{P\times_C \cdots \times_C P}_{(n+1)\text{ factors}}
\] 
is the $n$-fold pullback.
\end{definition}
		
\begin{example}[Category of sets]\label{ex - Cech resolution in set}
In the category of sets, we can understand the above construction as follows: for $n\geq 0$
\[
	\xymatrix{
		\check{C}(f,C)_n\coloneq \underbrace{P\times_C \cdots \times_C P}_{(n+1)-\text{factors}}=\{(x_0,\ldots, x_{n})\in P^{n+1}\mid f(x_0)=\cdots = f(x_{n})\}
	}
\]
and for face operators $\partial_{i}^n\colon \check{C}(f,C)_n\to \check{C}(f,C)_{n-1}$ and for the degeneracy operators $\sigma_{i}^n\colon \check{C}(f,C)_n\to \check{C}(f,C)_{n+1}$:
\begin{align*}
	&\partial_i^n(x_0,\cdots, x_n)=(x_0,\ldots, x_{i-1},x_{i+1}, \ldots, x_n)\\ 
	&\sigma_{i}^n(x_0,\ldots, x_n)=(x_0,\ldots, x_i, x_i,x_{i+1},\ldots, x_n)
\end{align*}
for $0\leq i\leq n$ (see~\cite{IH} for further details).
\end{example}

The following Proposition is an adaptation of Tierney and Vogel’s Theorem 2.4.~\cite{Tierney-Vogel2}:

\begin{proposition}[Lifting and uniqueness up to simplicial equivalence]\label{prop- Cech resolution unique up to homtopy}
	Any morphism $x\colon C\to C'$ lifts to a morphism of \v Cech resolutions $(f_n)\colon \check{C}(f,C)\to \check{C}(g,C')$, where $f$ and $g$ denote, respectively, a projective presentation of $C$ and $C'$. 
	
	If $(f_n),(g_n)\colon \check{C}(f,C)\to \check{C}(g,C')$ denote two possible liftings of $x\colon C\to C'$ then there exists a simplicial homotopy from $(f_n)$ to $(g_n)$. 
	
	In particular, for any object $C$, when the underlying category has finite limits and enough projectives, there exists a \v Cech resolution, unique up to a simplicial homotopy equivalence.
\end{proposition}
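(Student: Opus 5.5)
The plan is to follow the classical Tierney--Vogel argument, building the lifting degree by degree and then the homotopy by the same inductive device. The only inputs are projectivity of $P$ and the universal property of the iterated pullbacks (equivalently, of simplicial kernels).

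\emph{Existence of a lift.} Let $f\colon P\to C$ and $g\colon Q\to C'$ be the projective presentations. Since $g$ is a regular epimorphism and $P$ is projective, the composite $xf\colon P\to C'$ lifts to some $f_0\colon P\to Q$ with $gf_0=xf$. For $n\geq 1$ no further choices are needed. Both $\check C(f,C)_n$ and $\check C(g,C')_n$ are $(n+1)$-fold fibre products, so we define $f_n\coloneq f_0\times_x\cdots\times_x f_0$, the map induced on iterated pullbacks. It is well defined because $gf_0=xf$. In the description of \cref{ex - Cech resolution in set} it reads $(x_0,\dots,x_n)\mapsto (f_0x_0,\dots,f_0x_n)$. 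Compatibility with faces and degeneracies is then automatic, since each face or degeneracy is a projection or diagonal between fibre products, and $f_n$ is natural with respect to those. Alternatively, one may view $\check C(g,C')_n$ as the simplicial kernel of the previous faces and use its universal property directly.

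\emph{Homotopy.} Now let $(f_n),(g_n)$ be two lifts of $x$. A simplicial homotopy consists of maps $h_i\colon \check C(f,C)_n\to \check C(g,C')_{n+1}$ for $0\leq i\leq n$ satisfying the usual identities. Concretely, we set
\[
h_i(x_0,\dots,x_n)=(f_0x_0,\dots,f_0x_i,\,g_0x_i,\dots,g_0x_n).
\]
This tuple lies in the $(n+2)$-fold fibre product because $gf_0=xf=gg_0$. Categorically, $h_i$ is the morphism into the iterated pullback $Q\times_{C'}\cdots\times_{C'}Q$ whose components are $f_0$ composed with the projections $\mathrm{pr}_0,\dots,\mathrm{pr}_i$ and $g_0$ composed with the projections $\mathrm{pr}_i,\dots,\mathrm{pr}_n$. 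These components all agree after composing with $g$, since every one becomes $x$ composed with the common map to $C$. The homotopy identities then follow by comparing components. For instance, $\partial_0h_0=g_n$ and $\partial_{n+1}h_n=f_n$. The identities $\partial_ih_j=h_{j-1}\partial_i$ for $i<j$ and the analogous ones are just deletions of coordinates. Because morphisms into a limit are determined by their components, each identity reduces to an equality of projections.

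\emph{Uniqueness statement.} With finite limits the iterated kernel pairs exist, and with enough projectives a presentation exists, so a \v Cech resolution exists. Given two resolutions of $C$ coming from $f$ and $f'$, lift $1_C$ in both directions. Each composite is then a lift of $1_C$, as is the identity, so by the homotopy part each composite is homotopic to the identity. This gives a simplicial homotopy equivalence. The main technical obstacle is bookkeeping the homotopy identities in the chosen convention (for instance, whether $h_i$ should go from $f$ to $g$ or the reverse) and checking them cleanly in an arbitrary category via components of limits. The conceptual work is light, because the \v Cech object is coskeletal: it is the coskeleton of $P\to C$ relative to $C$, so maps into it and homotopies into it are determined by low-dimensional data.
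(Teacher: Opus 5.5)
Your proposal is correct and is essentially the adaptation of Tierney--Vogel that the paper invokes without details. Projectivity of $P$ is needed only to produce $f_0$; the higher $f_n$ and the entire homotopy are forced by the universal property of the iterated pullbacks, since the \v Cech object is exactly its own iterated simplicial kernels. Your explicit formula for $h_i$ makes this concrete, and it does satisfy all the face and degeneracy identities.

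One step is worth stating explicitly: an arbitrary simplicial lift $(g_n)$ necessarily equals $g_0\times_x\cdots\times_x g_0$. This holds because each projection $\check C(g,C')_n\to Q$ is an iterated face map and $(g_n)$ commutes with faces, and it is what justifies $\partial_0h_0=g_n$ and $\partial_{n+1}h_n=f_n$.
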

\begin{proof}
	Tierney and Vogel’s proof can easily be adapted to work also in our situation with Cech resolutions
\end{proof}

Let's also recall the standard result relating simplicial homotopies and homology:
\begin{lemma}
	Let $f,g\colon \mathbb{A}\to \mathbb{B}$ be simplicial morphisms in a semi-abelian category. If there exists a simplicial homotopy from $f$ to $g$ then we have $\H_n(\N(f))\cong\H_n(\N(g))$ for all $n\geq 0$.
\end{lemma}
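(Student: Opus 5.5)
The statement to prove is the lemma just above: a simplicial homotopy from $f$ to $g$ induces the same maps in homology. I will pass to the Moore complexes and show that $\N(f)$ and $\N(g)$ are chain homotopic in a suitable non-additive sense, so that they agree on $\H_n$. The approach is the classical one from the abelian case (cf.~\cite{Weibel}), adapted to the semi-abelian setting following~\cite{EverVdL2,VdLinden:Doc}.

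Let the simplicial homotopy be given by morphisms $h_i\colon A_n\to B_{n+1}$, $0\leq i\leq n$, satisfying the usual identities. These are $\partial_0h_0=f$, $\partial_{n+1}h_n=g$, $\partial_ih_j=h_{j-1}\partial_i$ for $i<j$, $\partial_{j+1}h_{j+1}=\partial_{j+1}h_j$, and $\partial_ih_j=h_j\partial_{i-1}$ for $i>j+1$.

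The first step is a reduction to the homotopy $h_n$ on normalized chains. Since $\N$ is computed with the faces $\partial_1,\dots,\partial_n$, I would first modify the homotopy by the standard "normalization" trick, so that it restricts to $\N_n(\mathbb{A})\to B_{n+1}$ with image landing in an appropriate kernel. Alternatively, I would use the known description, valid in a semi-abelian category, of $\H_n(\N(\mathbb{A}))$ as the quotient of the $n$-cycles $\bigcap_{i=0}^n\Ker\partial_i$ by the image of $\partial_0$ restricted to $\N_{n+1}$. With that description, it suffices to show two things for each cycle $x\colon Z\to \N_n(\mathbb{A})$:

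\begin{itemize}
\item $f x$ and $g x$ represent the same homology class;
\item this equality is natural in $Z$.
\end{itemize}

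The second step is to show directly that $\H_n(\N(f))=\H_n(\N(g))$ by passing through a path object. The simplicial homotopy is the same as a simplicial map $H\colon \mathbb{A}\otimes\Delta[1]\to\mathbb{B}$, or dually $\mathbb{A}\to\mathbb{B}^{\Delta[1]}$, with endpoints $f$ and $g$. Since $\C$ is semi-abelian, it is regular Mal'tsev, so simplicial objects are Kan (every simplicial object in a regular Mal'tsev category is a Kan complex, by Carboni--Kelly--Pedicchio). The path-object construction $\mathbb{B}^{\Delta[1]}$ is then defined via finite limits, and the two endpoint evaluations $e_0,e_1\colon\mathbb{B}^{\Delta[1]}\to\mathbb{B}$ have a common section, namely the constant-path map $c$.

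It then suffices to show that $\H_n(\N(c))$ is an isomorphism. Granting this, $\H_n(\N(e_0))$ and $\H_n(\N(e_1))$ are both inverse to $\H_n(\N(c))$, so they coincide. Hence
\[
\H_n(\N(f))=\H_n(\N(e_0))\,\H_n(\N(\tilde H))=\H_n(\N(e_1))\,\H_n(\N(\tilde H))=\H_n(\N(g)),
\]
where $\tilde H\colon\mathbb{A}\to\mathbb{B}^{\Delta[1]}$ is the map corresponding to the homotopy.

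The main obstacle is proving that $e_0$, equivalently $c$, induces isomorphisms on homology in the non-additive setting. My first attempt would use that $e_0$ is a degreewise split epimorphism with section $c$, whose kernel $\mathbb{K}$ is a simplicial object with contractible homotopy (an extra degeneracy coming from the path structure). I would then apply the long exact homology sequence for short exact sequences of simplicial objects in a semi-abelian category~\cite{EverVdL2}, which gives $\H_n(\N(\mathbb{K}))=0$ and hence $\H_n(\N(e_0))$ is an isomorphism. If constructing $\mathbb{B}^{\Delta[1]}$ proves awkward, my fallback is to verify the statement by hand in low degrees: for $x\in\bigcap\Ker\partial_i$, the element $h$ built from the $h_i$ gives an $(n+1)$-chain lying in $\N_{n+1}$ whose $\partial_0$ relates $fx$ and $gx$ via Mal'tsev operations. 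Here the Mal'tsev property replaces the alternating sum $\sum(-1)^ih_i$ used in the abelian proof.
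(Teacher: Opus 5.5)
\textbf{There is a genuine gap.} Your reduction is sound up to one point: the homotopy is a map $\tilde H\colon\mathbb{A}\to\mathbb{B}^{\Delta[1]}$, the maps $e_0,e_1$ share the section $c$, and the long exact homology sequence of the degreewise split sequence $0\to\mathbb{K}\to\mathbb{B}^{\Delta[1]}\to\mathbb{B}\to 0$, with $\mathbb{K}=\Ker(e_0)$, reduces the lemma to $\H_n(\N(\mathbb{K}))=0$ for all $n$. The gap is in justifying that last claim.

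The path structure does not give $\mathbb{K}$ an extra degeneracy. In degree $0$ the only natural choice is $s_{-1}(\gamma)=\gamma\circ\min$, where $\min\colon\Delta[1]\times\Delta[1]\to\Delta[1]$. In degree $1$, take a square $\Gamma\colon\Delta[1]\times\Delta[1]\to\mathbb{B}$ with trivial bottom edge. The identities $\partial_0s_{-1}=1$ and $\partial_{i+1}s_{-1}=s_{-1}\partial_i$ then force $s_{-1}(\Gamma)$ to restrict on the top face $\Delta[2]\times\{1\}$ to a $2$-simplex whose edges are the left, top and right edges of $\Gamma$. No such $2$-simplex exists for the universal square, e.g.\ in pointed simplicial sets, where the same finite-limit construction makes sense. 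There the only $2$-simplex with the given left and top edges has the diagonal, not the right edge, as its third edge.

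Extra degeneracies come from the d\'ecalage, built from joins $\Delta[0]\star\Delta[n]$. The d\'ecalage, however, does not provide a path object with two endpoints and a common section. What $\mathbb{K}$ actually carries is a simplicial homotopy from $0$ to $1_{\mathbb{K}}$, again induced by $\min$. Deducing $\H_n(\N(\mathbb{K}))=0$ from that is the lemma itself applied to the pair $(0,1_{\mathbb{K}})$, so the argument is circular. Also, \cite[Proposition 3.11]{EverVdL2} requires a genuine extra degeneracy, so it does not apply.

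The missing ingredient is a non-additive replacement for the alternating sum $\sum(-1)^ih_i$. You need an element-free argument, using the Kan property of simplicial objects in a semi-abelian category, that $d_{n+1}$ maps $\N_{n+1}(\mathbb{K})$ onto $\Ker(d_n)$ whenever $1_{\mathbb{K}}$ is simplicially null-homotopic (or directly that $\H_n(\N(f))=\H_n(\N(g))$).

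Your fallback does not supply this. It treats only low degrees. Moreover, Mal'tsev operations on elements are not available as morphisms in a general semi-abelian category: the Kan property only says that the maps onto horn objects are regular epimorphisms, so it gives no chosen fillers.

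The paper does not prove the lemma either; it defers to \cite{GVdL} and \cite{MC-FR-TVdL}, where this non-additive step is carried out. With that input, your path-object reduction would be a legitimate way to organise the proof, since it reduces everything to the contractible case. As written, though, the key step is unproved.
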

\begin{proof}
	See~\cite{GVdL} for the original proof and~\cite{MC-FR-TVdL} for an alternative and more ``recent'' proof.
\end{proof}
Combining the two previous results leads to:
\begin{corollary}
	Let $F\colon \C\to \E$ be a functor from a category $\C$ with finite limits and to $\E$ a semi-abelian category. For any $n\in \mathbb{N}$, we define 
	\[
	\Le_n(F)(C)\coloneq \H_n(\N(F(\check{C}(C)))).
	\]
	where $\check{C}(C)$ refers to $\check{C}(f,C)$ for $f$ any choice of projective presentation of $C$. For each $n\in \mathbb{N}$, $\Le_n(F)\colon \C\to \E$ is a functor defined up to natural isomorphism.
\end{corollary}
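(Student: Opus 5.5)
The plan is the standard ``comparison theorem'' argument: make a choice of \v Cech resolution for every object, define $\Le_n(F)$ on morphisms through the liftings of \cref{prop- Cech resolution unique up to homtopy}, and use the preceding lemma to show that no choice matters. I will implicitly assume, as in the last clause of \cref{prop- Cech resolution unique up to homtopy}, that $\C$ has enough projectives. So for every $C\in\C$ I first fix a projective presentation $f_C\colon P_C\to C$ and its \v Cech resolution $\check C(f_C,C)$, regarded as a simplicial object by dropping the augmentation. Applying $F$ degreewise gives a simplicial object $F(\check C(f_C,C))$ in the semi-abelian category $\E$. By \cref{def - Moore normalisation functor} its Moore complex is a proper chain complex, and $\Le_n(F)(C)$ is its $n$-th homology.

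For a morphism $x\colon C\to C'$, \cref{prop- Cech resolution unique up to homtopy} provides a lifting $(x_k)\colon \check C(f_C,C)\to\check C(f_{C'},C')$. I set $\Le_n(F)(x)\coloneq \H_n(\N(F(x_k)))$. To see that this does not depend on the lifting, let $(y_k)$ be another one. Then there is a simplicial homotopy $(h_i)$ from $(x_k)$ to $(y_k)$. Such a homotopy is a family of morphisms $h_i\colon A_k\to B_{k+1}$ subject to equations between composites of faces, degeneracies and the $h_i$. Any functor preserves such equations, so $(F(h_i))$ is a simplicial homotopy from $F(x_k)$ to $F(y_k)$; this uses no exactness of $F$ at all. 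The lemma on simplicial homotopies then gives $\H_n(\N(F(x_k)))=\H_n(\N(F(y_k)))$.

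Functoriality then follows from uniqueness of lifts up to homotopy. The identity simplicial map lifts $1_C$, so $\Le_n(F)(1_C)=1$. If $(x_k)$ lifts $x$ and $(y_k)$ lifts $y$, then $(y_kx_k)$ lifts $yx$, so $\Le_n(F)(yx)=\Le_n(F)(y)\Le_n(F)(x)$.

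Independence of the choice of presentations works the same way. Suppose $f,f'$ are two projective presentations of $C$, giving functors $\Le_n$ and $\Le'_n$. Lift $1_C$ to morphisms $\alpha\colon\check C(f,C)\to\check C(f',C)$ and $\beta\colon\check C(f',C)\to\check C(f,C)$. The composites $\beta\alpha$ and $\alpha\beta$ lift $1_C$, so they are homotopic to the identities. Hence $\H_n(\N(F\alpha))$ is an isomorphism with inverse $\H_n(\N(F\beta))$. Naturality in $C$ comes from the same uniqueness argument: for $x\colon C\to C'$, both composites around the naturality square are induced by lifts of $x$ between the same two resolutions, so they agree.

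The step needing most care is the homotopy-invariance step. The lemma is stated with ``$\cong$'', but what the argument needs is that homotopic maps induce \emph{equal} morphisms on homology. I would read the lemma in that sense, which is what the cited proofs in \cite{GVdL,MC-FR-TVdL} establish. I would also check that the notion of simplicial homotopy produced by \cref{prop- Cech resolution unique up to homtopy}, namely the Tierney--Vogel one, is the same as the one assumed in the lemma.
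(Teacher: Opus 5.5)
Your proposal is correct and takes the same approach as the paper. The paper's entire proof is the remark that the corollary follows by combining the lifting and uniqueness-up-to-homotopy proposition with the lemma that homotopic simplicial maps induce the same maps on homology. You have written that combination out, including three points the paper leaves implicit: $F$ carries simplicial homotopies to simplicial homotopies, enough projectives are required, and the lemma must be read as an equality of induced morphisms.
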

\begin{definition}[\v Cech derived functors]
	With the same assumptions and notations, for each $n\in \mathbb{N}$, we call $\Le_n(F)\colon \C\to \E$ the \emph{$n$-th \v Cech derived functor of $F$}.
\end{definition}

\begin{remark}[Inassaridze's \v Cech derived functors]
	When the codomain of $F$ is the category of groups, Inassaridze called the collection of groups $(\H_n(\N(F(\check{C}(f,C)))))_{n\geq 0}$ the \defn{\v Cech derived functors of $F$ with respect to $f$} (see for instance~\cite[Definition  2.3.8.]{IH}).
\end{remark}

\section{The proof}

By now we have explained our main result which concerns the $1$st \v Cech derived functor of $I$ (the reflector into a Birkhoff subcategory $\D$ of $\C$), but we still have to prove it. As a first step, we establish a formula for the right-hand side. We will use the following notation: given, in a regular category, a subobject $X$ of $Y$ and a morphism $g\colon Y\to Z$, we write $g(X)$ for the image of $X$ by $g$, i.e.\ the subobject given by the image factorization
\[
\xymatrix{
	X\ar@{->>}[r]\ar@{{ >}->}[d]& g(X)\ar@{{ >}->}[d]\\
	Y\ar[r]_-{g}& Z.
}
\]
The subobject $g(X)$ is a normal subobject of $Z$ when $X$ is a normal subobject of $Y$, $g$ is a regular epimorphism and the category is semi-abelian (see axiom (SA$^\ast$ $6$) in~\cite{Janelidze-Marki-Tholen}). Hence $g(X)$ can be understood to be the image of $X$ through $g$. 
A last notation: when $Y$ is a normal subobject of $X$, we denote by $X/Y$ the cokernel associated to the inclusion of $X$ into $Y$.

\subsection{First \v Cech derived functor of \texorpdfstring{$I$}{I}}
	
\begin{proposition}\label{proposition:Cech homology H2}
	$\Le_1(I)(f,C)\cong \frac{\Ker(I(\partial^1_0))\cap \Ker(I(\partial^1_1))}{I(\partial_0^2)(\Ker(I(\partial^2_1))\cap \Ker(I(\partial^2_2)))}$
\end{proposition}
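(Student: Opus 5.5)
We compute $\Le_1(I)(f,C)=\H_1(\N(I(\check C(f,C))))$ directly from \cref{def - Moore normalisation functor}, applied to the simplicial object $I(\check C(f,C))$. This object has faces $I(\partial^n_i)$; we may drop the augmentation, or keep it, since it does not affect $\N_1$ and $\N_2$ in positive degrees except through the boundary $\N_1\to\N_0$. The task is to identify three things: the cycles $Z_1$, the boundaries $B_1$, and the quotient $Z_1/B_1$.

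For the cycles, $\N_1 = \Ker(I(\partial^1_1))$ and $d_1$ is the restriction of $I(\partial^1_0)$. Hence
\[
Z_1=\Ker(d_1)=\Ker(I(\partial^1_0))\cap\Ker(I(\partial^1_1)),
\]
which is exactly the numerator of the statement. (If one regards the complex as unaugmented, this is all; for the first homology, $d_1$ lands in $\N_0=I(P)$ and nothing further is involved.)

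For the boundaries, $\N_2=\Ker(I(\partial^2_1))\cap\Ker(I(\partial^2_2))$ and $d_2$ is the restriction of $I(\partial^2_0)$. In a semi-abelian category the image of $d_2$ is the regular image of this subobject under $I(\partial^2_0)$, so $B_1 = I(\partial^2_0)\bigl(\Ker(I(\partial^2_1))\cap\Ker(I(\partial^2_2))\bigr)$, which is the denominator. The image factorization $\N_2\twoheadrightarrow B_1\rightarrowtail \N_1 \rightarrowtail I(P\times_C P)$ is consistent because the simplicial identities give $I(\partial^1_i)I(\partial^2_0)=I(\partial^1_0)I(\partial^2_{i+1})$, so $d_2$ indeed lands in $\N_1$ (and in $Z_1$).

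It remains to show that $B_1$ is normal in $Z_1$, so that the quotient notation $Z_1/B_1$ makes sense and equals $\H_1$. This follows from the fact, recalled in \cref{def - Moore normalisation functor} (from~\cite{EverVdL2}), that $\N$ of a simplicial object in a semi-abelian category is a proper chain complex. Thus $d_2$ has normal image, and $\H_1=\Coker(\N_2\to Z_1)=Z_1/B_1$. We do not need that the maps $I(\partial^2_0)$ are regular epimorphisms, although they are: $I$ preserves regular epimorphisms. If one prefers not to invoke properness, the alternative is to note that $I(\partial^2_0)$ is a regular epimorphism and to apply (SA$^\ast$6) to the normal subobject $\N_2$ of $I(P\times_C P\times_C P)$. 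This gives normality in $I(P\times_C P)$, and hence in $Z_1$.

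The main obstacle is only this normality/properness point and the identification of images; the rest is unwinding the definitions. Independence of the choice of $f$ is already provided by the corollary above.
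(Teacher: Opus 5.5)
Your proposal is correct and follows the paper's proof. Both arguments identify $\Ker(d_1)$ with $\Ker(I(\partial^1_0))\cap\Ker(I(\partial^1_1))$, identify the image of $d_2$ with the regular image $I(\partial^2_0)\bigl(\Ker(I(\partial^2_1))\cap\Ker(I(\partial^2_2))\bigr)$, and use properness of the Moore complex~\cite{EverVdL2} to get normality and hence $\H_1=Z_1/B_1$. Your explicit simplicial-identity check that $d_2$ lands in $Z_1$, and your alternative normality argument via (SA$^\ast$6) using that $I(\partial^2_0)$ is split hence regular epi, are valid supplements that the paper leaves implicit.
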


\begin{proof}
In this proof—and throughout this text—given a morphism g, when we use the notation $\underline{g}$ it will always denote a morphism which is induced by $g$ in an obvious way.

The idea of the proof is to understand what are the objects $\N_n\coloneq \N(I(\check{C}(f,C)))_n$ and the morphisms $d_n\coloneq d_n^{\N(I(\check{C}(f,C)))}$ for $n=-1$, $0$, $1$ and $2$ of the Moore normalization chain complex. With the notations of \cref{def - Moore normalisation functor} and of \cref{def-Cech resolution and homology}, we can consider 

\begin{equation}\label{fig - Moore complex of Cech resolution}
		\vcenter{\resizebox{\textwidth}{!}{
		\xymatrix{		
			\cdots\ar@<2.25ex>[d]\ar@<-2.25ex>[d]\ar@<0.75ex>[d]\ar@<-0.75ex>[d] &\cdots\ar[d]\\
			I(P\times_C P\times_C P)\ar@<3.25ex>[dd]^-{I(\partial_0^2)}\ar@<-3.25ex>[dd]_-{I(\partial_2^2)}\ar[dd]|-{I(\partial_1^2)}&\N_2=\Ker(I(\partial^2_1))\cap \Ker(I(\partial^2_2))\ar[dd]_-{d_2}\ar@{-->}[dr]^-{\underline{d_2}}\ar@{{ |>}->}[l]\ar@{-{ >>}}[r]&\mathrm{Im}(\underline{d_2})\ar@{{ |>}->}[d]^-{m}\\
			&&\Ker(d_1)\ar@{{ |>}->}[dl]^-{\ker(d_1)}\ar@{-{ >>}}[d]^-{\coker(\underline{d_2})}\\
			I(P\times_C P)\ar@<0.75ex>[d]^-{I(\partial_0^1)}\ar@<-0.75ex>[d]_-{I(\partial_1^1)}&\N_1=\Ker(I(\partial^1_1))\ar[d]|-{d_1=I(\partial^1_0)\ker(I(\partial^1_1))}\ar@{{ |>}->}[l]&\H_1(\N)\\
			I(P)&\N_0= I(P)\ar[d]\ar@{=}[l]&\\
			&\N_{-1}= 0&
		}
		}
		}
\end{equation}
By pullback concatenation, it can be shown that $\Ker(I(\partial_1^0))\cap \Ker(I(\partial_1^1))$ is the kernel of $d_1$. Moreover, the cokernel of $\underline{d_2}$ is isomorphic to the cokernel of $m$, the image of $d_2'$ which is normal since $\N$ a proper complex. To conclude, it suffices then to prove that the image of $\underline{d_2}$ is $I(\partial_0^2)(\Ker(I(\partial^2_1))\cap \Ker(I(\partial^2_2)))$:
\[
\mathrm{Im}(\underline{d_2})\cong \mathrm{Im}(d_2)\cong \mathrm{Im}(I(\partial_0^2))\cong I(\partial_0^2)(\Ker(I(\partial^2_1))\cap \Ker(I(\partial^2_2))).
\] 
\end{proof}
	
\subsection{Barr--Beck simplical approach to \texorpdfstring{$\H_2$}{H2}}\label{sec - Barr beck approach H2}

The purpose of this subsection is to prove the following result---which, combined with Proposition \ref{proposition:Cech homology H2} will imply our main result:
\begin{proposition}\label{prop: Barr-Beck homology H2}\label{prop - classical result to compute H0}
	$\H_{2}(C,I)_{\G}\cong \frac{\Ker(I(\partial^1_0))\cap \Ker(I(\partial^1_1))}{I(\partial_0^2)(\Ker(I(\partial^2_1))\cap \Ker(I(\partial^2_2)))}$
\end{proposition}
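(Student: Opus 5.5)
Here the maps $\partial^1_i,\partial^2_j$ on the right-hand side are those of a \v Cech resolution built on a projective presentation; I will take $f=\varepsilon_C\colon G(C)\to C$ and use the Hopf formula route. The plan has three steps: compute $\H_1(\N(I(\G(C))))$ by the same truncation as in the proof of \cref{proposition:Cech homology H2}, then compare the low-degree part of $\G(C)$ with that of $\check C(\varepsilon_C,C)$ through a comparison map of augmented simplicial objects, and finally show that this comparison is an isomorphism on the relevant subquotients after applying $I$.

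\emph{Step 1: the comparison map.} By the universal property of kernel pairs and simplicial kernels, the augmented simplicial object $\G(C)$ maps canonically to $\check C(\varepsilon_C,C)$. In degree $0$ the map is the identity of $G(C)$. In degree $1$ it is $(\varepsilon_{G(C)},G(\varepsilon_C))\colon G^2(C)\to G(C)\times_C G(C)$. In degree $2$ it is the induced map $G^3(C)\to G(C)\times_C G(C)\times_C G(C)$ into the simplicial kernel. The square displayed in \cref{sec : assumptions} is a regular pushout, so the degree-$1$ comparison is a regular epimorphism. For the degree-$2$ comparison I will use Proposition 5.6 of~\cite{EGVdL}, applied to the $2$-fold extension given by that square, i.e.\ with $n=2$ and the comonad $\G_2$. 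It yields that the cube formed by $G^3(C)$, its faces, and the lower pieces is a $3$-fold extension. Combined with the fact that the \v Cech cube~\eqref{fig - definition of qi's} is a limit-type cube, this should give that the degree-$2$ comparison is also a regular epimorphism, and that the comparison is surjective onto the normalized objects.

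\emph{Step 2: the two homologies agree.} Apply $I$ to everything. By \cref{rem - regular pushout are preserved by left-adjoint}, $I$ preserves the regular pushouts and regular epimorphisms above. Then I will argue on the Moore complexes in degrees $0,1,2$.

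In degree $1$, $\N_1$ for $\G(C)$ is $\Ker(I(\varepsilon_{G(C)}))$ with $d_1=I(G(\varepsilon_C))$ restricted to it. For the \v Cech side it is $\Ker(I(\partial^1_1))$ with $d_1$ the restriction of $I(\partial^1_0)$. The comparison gives a regular epimorphism between these, because a regular pushout induces a regular epimorphism on kernels and $I$ preserves the regular pushout. The same argument in degree $2$, using the $3$-fold extension, gives a regular epimorphism $\N_2(I\G(C))\to \N_2$.

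Surjectivity in degree $2$ means the image of $d_2$ on the Barr--Beck side maps onto the denominator $I(\partial_0^2)(\Ker(I(\partial^2_1))\cap \Ker(I(\partial^2_2)))$. Surjectivity in degree $1$ means cycles map onto cycles, $\Ker(I(\partial^1_0))\cap\Ker(I(\partial^1_1))$. Hence the induced map $\H_1(\N(I\G(C)))\to$ (the quotient in the statement) is a regular epimorphism.

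\emph{Step 3: injectivity, the main obstacle.} It remains to show that the kernel of the degree-$1$ comparison, intersected with the cycles, lies in the image of $d_2$. My first approach is to bypass the direct computation and invoke the Hopf formula of~\cite{EGVdL}. There, $\H_2(C,I)_\G$ is expressed via the double extension given by the square of \cref{sec : assumptions} as
\[
\frac{[\ldots]\cap K}{L_1[\ldots]},
\]
a quotient of the kernel intersection by a commutator. I would then identify the numerator and denominator of the formula in the statement with that expression, computing them in $\check C(\varepsilon_C,C)$ for the kernel pair of $\varepsilon_C$. The key fact is that for a regular pushout, $I$ of the comparison to the pullback detects exactly the Birkhoff commutator.

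If this identification is awkward, my fallback is to use a simplicial homotopy argument. By \cref{prop- Cech resolution unique up to homtopy}, the \v Cech side is independent of the presentation. One then constructs a section-like map back from the truncated \v Cech object into $\G(C)$ using projectivity of $G^k(C)$ only up to degree $2$, and checks that the composite is homotopic to the identity on the $1$-truncation. I expect verifying that the kernel contribution is exactly the image of $d_2$ to be the delicate point.
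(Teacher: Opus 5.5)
There is a genuine gap: Step~3, which carries all of the content, is not carried out. Steps~1--2 give at most a regular epimorphism from $\H_2(C,I)_\G$ onto the quotient $Q$ in the statement. For that, degree $1$ is enough: the comparison is the identity in degree $0$, so the $1$-cycles on the Barr--Beck side are exactly the preimage of the \v Cech $1$-cycles. Injectivity is what has to be proved, and you do not prove it.

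Your fallback cannot work as described. A simplicial map from the truncated \v Cech object to $\G(C)$ that is the identity in degree $0$ needs, in degree $1$, a section of the regular epimorphism $G^2(C)\to G(C)\times_C G(C)$. So it requires the \v Cech terms (the domain) to be projective, not the $G^k(C)$; the Tierney--Vogel lifting only runs from projective resolutions into resolutions. This fails in practice: for groups with the free-group comonad and $C=0$, one has $G(C)\times_C G(C)\cong\mathbb{Z}\times\mathbb{Z}$, which does not embed in the free group $G^2(C)$.

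The Hopf-formula route is viable, but you only gesture at it with placeholders, and you aim it at the wrong extension. For $\H_2$ one uses the single presentation $f\colon P\to C$: $\H_2(C,I)_\G\cong (K[f]\cap[P,P]_\D)/[K[f],P]_\D$. Here $[X,X]_\D=\Ker\eta_X$ for the unit $\eta$, and the relative commutator can be written $[K[f],P]_\D=\partial^1_0(\Ker\partial^1_1\cap[R,R]_\D)$ with $R=P\times_C P$. The double extension of Section~1 is what enters for $\H_3$. The match with the statement is also not term by term; it needs two computations missing from your plan.

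\emph{First, the denominator of the statement is $0$.} The simplicial identities $\partial^1_0\partial^2_0=\partial^1_0\partial^2_1$, $\partial^1_1\partial^2_0=\partial^1_0\partial^2_2$, $\partial^1_1\partial^2_1=\partial^1_1\partial^2_2$, together with joint monicity of $(\partial^1_0,\partial^1_1)$, give $\partial^2_0\ker\partial^2_2=\partial^2_1\ker\partial^2_2$. Since $\D$ is Birkhoff, the naturality square of $\eta$ at a regular epimorphism $p$ is a pushout, hence a regular pushout, so $\Ker p\to\Ker I(p)$ is a regular epimorphism. Therefore $I(\partial^2_0)$ and $I(\partial^2_1)$ agree on $\Ker I(\partial^2_2)$, and $d_2=I(\partial^2_0)|_{\N_2}=I(\partial^2_1)|_{\N_2}=0$.

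\emph{Second, the numerator is already the whole Hopf quotient.} Apply the same fact to $p=\partial^1_1$: the kernel of $\Ker\partial^1_1\to\Ker I(\partial^1_1)$ is $\Ker\partial^1_1\cap[R,R]_\D$. Combined with $\partial^1_0\ker\partial^1_1=\ker f$, this shows that $\Ker I(\partial^1_0)\cap\Ker I(\partial^1_1)$ is isomorphic to $(K[f]\cap[P,P]_\D)/[K[f],P]_\D$.

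With both facts in hand, Steps~1--2 are unnecessary, and the proposition follows in a few lines from the Everaert--Van~der~Linden Hopf formula. That would be a genuinely different proof, shorter but dependent on an external result. The paper quotes no Hopf formula. It takes the long exact homology sequence of $0\to\Ker I(\G(f))\to I(\G(P))\to I(\G(C))\to 0$ with $\H_1(I(\G(P)))=0$, computes $\H_0(\Ker I(\G(f)))$ through the coequalizer lemma, and concludes with the $3\times3$ lemma. As written, your proposal establishes neither injectivity nor the identification with the Hopf formula.
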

	
Consider the diagram
\begin{equation}
	\xymatrix{
		0\ar[r] &\Ker(I(\G(\partial^2_2)))\ar@{{ |>}->}[r]\ar@{-->}@<-.75ex>[d]_-{\underline{I(\G(\partial^2_0))}}\ar@{-->}@<.75ex>[d]^-{\underline{I(\G(\partial^2_1))}}&I(\G(P\times_C P \times_C P))\ar@{-{ >>}}[r]^-{I(\G(\partial^2_2))}\ar@<-.75ex>[d]_-{I(\G(\partial^2_0))}\ar@<.75ex>[d]^-{I(\G(\partial^2_1))} & I(\G(P\times_C P))\ar[r]\ar@<-.75ex>[d]_-{I(\G(\partial^2_0))}\ar@<.75ex>[d]^-{I(\G(\partial^2_1))}& 0\\
		0\ar[r] &\Ker(I(\G(\partial^1_1)))\ar@{{ |>}->}[r]\ar@{-->}[d]_-{\underline{I(\G(\partial^1_0))}}& I(\G(P\times_C P))\ar@{-{ >>}}[r]^-{I(\G(\partial^1_1))} \ar[d]_-{I(\G(\partial^1_0))}& I(\G(P))\ar[r] \ar[d]^-{I(\G(f))} & 0\\
		0\ar[r]&\Ker(I(\G(f)))\ar@{{ |>}->}[r]& I\G(P)\ar@{-{ >>}}[r]^-{I(\G(f))}& I(\G(C))\ar[r]&0
	}\label{diag: setting}
\end{equation}
in the category $\s(\D)$ of simplicial objects in $\D$, where we use the same notation as in \cref{proposition:Cech homology H2}. 

The idea of the proof is to consider the long exact homology sequence associated with the bottom row in~\eqref{diag: setting} and infer our claim from the exactness of this sequence~\cite{EverVdL2}. To do that, we will need a few results:

\begin{lemma}\label{lemma:split epi and H0}
	If $g\colon X\to Y$ is a split epimorphism then $\H_0(\Ker(I(\G(g))))\cong \Ker(I(g))$.
\end{lemma}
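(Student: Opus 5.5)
We prove the statement by comparing the short exact sequence of simplicial objects
\[
0\to \Ker(I(\G(g)))\to I(\G(X))\xrightarrow{I(\G(g))} I(\G(Y))\to 0
\]
in $\s(\D)$ with the short exact sequence $0\to\Ker(I(g))\to I(X)\to I(Y)\to 0$ in $\D$.

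First, exactness. Since $g$ is a split epimorphism with splitting $s$, say, each $G^{n+1}(g)$ is split by $G^{n+1}(s)$. Applying $I$ gives degreewise split epimorphisms $I(G^{n+1}(g))$. Hence the displayed sequence is a degreewise short exact sequence of simplicial objects in the semi-abelian category $\D$. Moreover, the simplicial morphism $I(\G(g))$ is split by the simplicial morphism $I(\G(s))$, by naturality of the face and degeneracy maps.

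Second, apply the long exact homology sequence of~\cite{EverVdL2} to this short exact sequence of simplicial objects:
\[
\H_1(\N(I\G(X)))\to \H_1(\N(I\G(Y)))\to \H_0(\Ker(I(\G(g))))\to \H_0(\N(I\G(X)))\to \H_0(\N(I\G(Y)))\to 0 .
\]
Since $I\G(g)$ is split by $I\G(s)$, and homology is functorial, each map $\H_n(I\G(g))$ is a split epimorphism. In particular the first map is a surjection, so the connecting morphism $\H_1(\N(I\G(Y)))\to\H_0(\Ker(I(\G(g))))$ is zero. Exactness at $\H_0(\Ker(I(\G(g))))$ then says that $\H_0(\Ker(I(\G(g))))$ is exactly the kernel of $\H_0(\N(I\G(X)))\to \H_0(\N(I\G(Y)))$. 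This uses that, in a semi-abelian category, a zero incoming map in an exact sequence makes the next map a monomorphism.

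Third, identify $\H_0$. By the classical computation, $\H_0(\N(I\G(C)))\cong \H_1(C,I)_\G\cong I(C)$. To see this, note that $\H_0$ is the coequalizer of $I(G(\varepsilon_C))$ and $I(\varepsilon_{G(C)})$. Because $G^2(C)\rightrightarrows G(C)\to C$ is a coequalizer diagram, namely the regular epimorphism $\varepsilon_C$ coequalizes the pair whose image factors through its kernel pair, and because $I$ preserves colimits, this coequalizer is $I(C)$. Naturality of this isomorphism in $C$ turns the map $\H_0(\N(I\G(X)))\to\H_0(\N(I\G(Y)))$ into $I(g)$, and the claim $\H_0(\Ker(I(\G(g))))\cong\Ker(I(g))$ follows.

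The main obstacle is the coequalizer step. We must verify that $\varepsilon_C$ is the coequalizer of the pair $(\varepsilon_{G(C)},G(\varepsilon_C))$ in $\C$, not just a regular epimorphism that coequalizes them. I would argue this via the contractibility of $U\G(C)$ in $\X$: the augmented simplicial object obtained by applying $U$ admits the extra degeneracy $\eta_{U(\cdot)}$, which makes $U$ of the pair a split coequalizer. Combining this with the fact that $\varepsilon_C$ is a regular epimorphism, the kernel pair of $\varepsilon_C$ is the image of $(\varepsilon_{G(C)},G(\varepsilon_C))\colon G^2(C)\to G(C)\times_C G(C)$. That image is all of the kernel pair, because the comparison morphism is a regular epimorphism: this is exactly the regular-pushout (double extension) property of the square with sides $\varepsilon_{G(C)}$, $G(\varepsilon_C)$ and $\varepsilon_C$, $\varepsilon_C$ recalled in \cref{sec : assumptions}. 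Hence $\varepsilon_C$ coequalizes precisely the kernel pair, so it is the coequalizer of the pair. Since $I$ preserves coequalizers, the identification $\H_0(\N(I\G(C)))\cong I(C)$ follows.
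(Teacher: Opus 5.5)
Your proof is correct and follows the paper's argument. You take the long exact homology sequence of the degreewise split short exact sequence $0\to\Ker(I(\G(g)))\to I(\G(X))\to I(\G(Y))\to 0$, use that $\H_1(I(\G(g)))$ is a split epimorphism to kill the connecting map, and then use $\H_0(\N(I(\G(C))))\cong I(C)$ naturally in $C$. The only difference is that you prove this last identification yourself, via the regular-pushout property from Section 1 (which makes $\varepsilon_C$ the coequalizer of $(\varepsilon_{G(C)},G(\varepsilon_C))$) and $I$ preserving coequalizers, whereas the paper cites \cite[Proposition 6.5]{EverVdL2}; your remark about the $U$-split coequalizer in $\X$ is not needed for that step.
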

\begin{proof} 
	If we consider the long exact homology sequence associated with the split short exact sequence
	\[
	\xymatrix{
		0\ar[r]&\Ker(I(\G(g)))\ar@{{ |>}->}[r]& I\G(X)\ar@{-{ >>}}@<.75ex>[r]^-{I(\G(g))}& I(\G(Y))\ar[r]\ar@{{ >}->}@<.75ex>[l]&0
	}
	\]
	then we have
	\[
	\resizebox{\textwidth}{!}
	{
		\xymatrix{
			&&\Coker(H_1(I(\G(g))))=0\ar@{{ |>}->}[dr]&&&&&\\
			\H_1(I(\G(X))) \ar@{-{ >>}}@<.5ex>[r]^-{H_1(I(\G(g)))}&\H_1(I(\G(Y)))\ar@{{ >}->}@<.5ex>[l]\ar[rr]\ar@{-{ >>}}[ru]^-{\coker(H_1(I(\G(g))))}&&\H_0(\Ker(I(\G(g))))\ar[rr]\ar@{-{ >>}}[rd]_-{g}&&I(X)\ar@{-{ >>}}@<.5ex>[r]^-{I(g)}&I(Y)\ar@{{ >}->}@<.5ex>[l]\ar[r] &0\\
			&&&&\Ker(I(g))\ar@{{ |>}->}[ru]&&&
		}
	}
	\]
	where $\H_0(I(\G(X)))\cong I(X)$ and $\H_0(I(\G(Y)))\cong I(Y)$ by~\cite[Proposition 6.5.]{EverVdL2}. By the exactness at $I(X)$, $\H_0(\ker(I(\G(g))))$ factors through the kernel of $I(g)$ via a regular epimorphism. Since $\H_1(I(\G(g)))$ is a split epimorphism, it has a trivial cokernel which, by exactness in $\H_0(\Ker(I(\G(g))))$, is the kernel of $p$. Hence, $p$ is in fact an isomorphism, and this prove our claim.
\end{proof}

\begin{lemma}\label{lemma - section regular pushout and projective}
	Consider a regular pushout
	\[
	\xymatrix{
		X'\ar@{-{ >>}}[r]^-{f'}\ar@{-{ >>}}[d]_-{x}& Y'\ar@{-{ >>}}[d]^-{y}\\
		X\ar@{-{ >>}}[r]_-{f} & Y
	}.
	\]
	For any given section $t$ of $y$, if $X$ is projective then we can define a section $s$ of $x$ such that $f's=tf$.
\end{lemma}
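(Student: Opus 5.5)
The plan is to use the universal property of the pullback of $f$ and $y$, together with projectivity of $X$. Since the square is a regular pushout, the comparison morphism $\langle x, f'\rangle\colon X'\to X\times_Y Y'$ into the pullback of $f$ and $y$ is a regular epimorphism. This is the only place where the regular pushout hypothesis, rather than mere commutativity of the square, is used.

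First I build a morphism $X\to X\times_Y Y'$. Consider the pair $(1_X, tf)\colon X\to X$, $X\to Y'$. It is compatible with the pullback, because
\[
f\circ 1_X = f = (yt)f = y\circ(tf),
\]
using $yt=1_Y$. By the universal property of the pullback we therefore obtain a morphism $u=\langle 1_X, tf\rangle\colon X\to X\times_Y Y'$. Its composites with the projections are $\pi_X u = 1_X$ and $\pi_{Y'}u = tf$.

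Next I lift $u$. Since $X$ is projective and $\langle x,f'\rangle\colon X'\to X\times_Y Y'$ is a regular epimorphism, $u$ lifts along it. This gives $s\colon X\to X'$ with $\langle x,f'\rangle s = u$. Composing with the two pullback projections yields
\[
xs = \pi_X u = 1_X, \qquad f's = \pi_{Y'}u = tf.
\]
So $s$ is a section of $x$ satisfying $f's=tf$, as required.

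There is no serious obstacle. The one point to check is that ``regular pushout'' in the sense used in \cref{sec : assumptions} indeed includes the condition that the comparison map to the pullback is a regular epimorphism. This holds by the definition recalled there, so the lifting step is legitimate.
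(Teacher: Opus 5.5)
Your proof is correct and matches the paper's argument. The paper likewise uses the pullback property to get the section $t'=\langle 1_X, tf\rangle$ of $\pi_1$, lifts it along the regular epimorphism $\langle x,f'\rangle$ using projectivity of $X$, and reads off $xs=1_X$ and $f's=tf$ from the two projections.
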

\begin{proof}
	Let $t\colon Y\to Y'$ be a section of $y$. By the pullback property, we obtain a section $t'$ of $\pi_1$ such that $\pi_2t'=tf$.
	\[
	\xymatrix{
		X'\ar@{-{ >>}}[rr]^-{f'}\ar@{-{ >>}}[dd]_-{x}\ar@{--{ >>}}[dr]|-{\univ{x}{f'}}&& Y'\ar@{-{ >>}}[dd]_-{y}\\
		&X\times_Y Y'\pullback\ar@{-{ >>}}[ru]_-{\pi_2}\ar@{-{ >>}}[dl]^-{\pi_1}\\
		X\ar@{-{ >>}}[rr]_-{f}\ar@/^1pc/@{{ >}-->}[ur]^-{t'} \ar@/^ 1.5pc/@{{ >}-->}[uu]^ -{s}  && Y \ar@/_1.5pc/@{{ >}-->}[uu]_-{t}
	}.
	\]
	Since we have a regular pushout, the comparison morphism $\univ{x}{f'}$ is a regular epimorphism. As a consequence, since $X$ is projective, we can define a section of $x$ as a lifting of $t'$ over the regular epimorphism $\univ{x}{f'}$, so that $\univ{x}{f'}s=t'$. This morphism $s\colon X\to X'$ is indeed a section of $x$:
	\[
	xs=\pi_1 \univ{x}{f'}s= \pi_1 t'= 1_X.
	\]
	
	And it is compatible with the section $t$ of $y$:
	\[
	f's= \pi_2\univ{x}{f'}s=\pi_2 t'=tf.\qedhere
	\]
\end{proof}

\begin{proposition}\cite[Proposition 3.9., Corollary 3.10.]{EverVdL2}\label{lemma - H0 is clear}
	In a semi-abelian category, let 
	\[
	\xymatrix{
		A\ar@<0.75ex>[r]^-{\partial_0}\ar@<-0.75ex>[r]_-{\partial_1} & B\ar[r]^-{e}& C
	}
	\]
	be a fork and $t\colon B\to A$ a map with $\partial_0 t= 1_B=\partial_1 t$. The following are equivalent:
	\begin{tfae}
		\item $e$ is the coequalizer of $\partial_0$ and $\partial_1$;
		\item $e$ is the cokernel of $\partial_1 \ker(\partial_0)$.
	\end{tfae}
	In particular, given a simplicial object $\mathbb{A}$ (with face operators $\partial_0^1,\partial_1^1\colon A_1\to A_0$), we have that $\H_0(\mathbb{A})=\Coeq(\partial_0^1,\partial_1^1)$.
\end{proposition}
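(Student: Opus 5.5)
The plan is to prove both implications with a single argument. The key fact is that, in a semi-abelian category (which is pointed and protomodular), a split epimorphism $\partial_0\colon A\to B$ with section $t$ makes the pair $\bigl(k\colon K\to A,\ t\colon B\to A\bigr)$ jointly strongly epimorphic, where $k=\ker(\partial_0)$. This is the standard characterisation of protomodularity in the pointed setting (the split short five lemma). Hence two morphisms out of $A$ are equal as soon as they agree after precomposition with $k$ and with $t$. This is the only place where the ambient structure enters; the rest is formal.

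First I will show that the two conditions on a morphism $g\colon B\to Z$ coincide, namely
\[
g\partial_0=g\partial_1 \iff g\partial_1k=0 .
\]
For ($\Rightarrow$): from $g\partial_0=g\partial_1$ we get $g\partial_1k=g\partial_0k=0$. For ($\Leftarrow$): suppose $g\partial_1k=0$. Then $g\partial_0$ and $g\partial_1$ agree on $k$, since both composites vanish because $\partial_0k=0$. They also agree on $t$, since $g\partial_0t=g=g\partial_1t$. By joint strong epimorphicity, $g\partial_0=g\partial_1$.

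Since the coequaliser of $(\partial_0,\partial_1)$ and the cokernel of $\partial_1k$ are defined by the same universal property over this class of morphisms $g$, the equivalence of (1) and (2) follows. Concretely, $e$ satisfies one universal property if and only if it satisfies the other. Uniqueness of the factorisations is the same in both cases, since in either case $e$ is an epimorphism.

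For the ``in particular'' statement, I will apply this to $\partial_1^1,\partial_0^1\colon A_1\to A_0$, using the degeneracy $\sigma_0\colon A_0\to A_1$ as the common section: it satisfies $\partial_0^1\sigma_0=1=\partial_1^1\sigma_0$. The roles of the two face maps must be swapped relative to the statement, because the Moore complex uses $\ker(\partial^1_1)$ and $d_1=\partial^1_0\ker(\partial^1_1)$. The statement is symmetric in $\partial_0,\partial_1$ (the argument above works verbatim with the indices exchanged), so this is harmless. Then $\H_0(\N(\mathbb{A}))=\Coker(d_1)=\Coker(\partial^1_0\ker(\partial^1_1))\cong\Coeq(\partial^1_0,\partial^1_1)$.

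The main (though mild) obstacle is justifying the joint strong epimorphicity of $(\ker\partial_0,t)$. I would cite it directly as the pointed form of protomodularity, as in Bourn's work or the semi-abelian literature, rather than reprove it.
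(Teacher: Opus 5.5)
Your proof is correct. Note that the paper gives no argument of its own for this statement; it only cites Everaert and Van~der~Linden, so there is no internal proof to compare against.

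Your route is complete and self-contained. You show that a morphism $g\colon B\to Z$ satisfies $g\partial_0=g\partial_1$ if and only if $g\partial_1\ker(\partial_0)=0$. The nontrivial direction uses that $(\ker(\partial_0),t)$ is jointly strongly, hence jointly, epimorphic. It follows that the coequalizer and the cokernel solve literally the same universal problem. A side benefit is that only pointed protomodularity is used, together with the relevant kernels and cokernels. Barr-exactness and coproducts play no role.

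Your treatment of the ``in particular'' clause is also right. In the paper's convention $\N_1=\Ker(\partial^1_1)$ and $d_1=\partial^1_0\ker(\partial^1_1)$, so you need the version with the face maps swapped. That swap is legitimate: the hypothesis and the coequalizer are symmetric in the two face maps, with $\sigma_0$ as common section. Finally, $\H_0=\Coker(d_1)$ because $d_0=0$, so $\Ker(d_0)=A_0$.
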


As a consequence, we can compute $\H_0(\Ker(I(\G(f))))$ as a coequalizer object in $\D$. However, $\Ker(I(\G(f)))$ is itself a coequalizer object in $\s(\D)$:
\begin{lemma}\label{lemma:IGp1 coequalizer}
	The morphism $\underline{I(\G(\partial^1_0))}$ is the coequalizer of $(\underline{I(\G(\partial^2_0))},\underline{I(\G(\partial^2_1))})$ in $\s(\D)$.
\end{lemma}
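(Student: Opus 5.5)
Since colimits in $\s(\D)$ are computed degreewise, I will check the claim in each simplicial degree $n\geq 0$. There it says that the restriction $\underline{I(G^{n+1}(\partial^1_0))}\colon \Ker(I(G^{n+1}(\partial^1_1)))\to \Ker(I(G^{n+1}(f)))$ is the coequalizer of the two restrictions of $I(G^{n+1}(\partial^2_0))$ and $I(G^{n+1}(\partial^2_1))$. The plan is to obtain the whole statement from one structural fact: the two right-hand columns of diagram~\eqref{diag: setting} are exact forks (coequalizer diagrams), and the rows are split.

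First I show that the middle column of~\eqref{diag: setting}, $I(\G(P\times_CP\times_CP))\rightrightarrows I(\G(P\times_CP))\to I(\G(P))$ with the maps $I(\G(\partial^2_0)),I(\G(\partial^2_1))$ and $I(\G(\partial^1_0))$, is a coequalizer, and likewise the right column $I(\G(P\times_CP))\rightrightarrows I(\G(P))\to I(\G(C))$. In $\C$ the fork $P\times_CP\rightrightarrows P\to C$ is a kernel pair of a regular epimorphism, hence an exact fork. Applying $G^{n+1}=(FU)^{n+1}$ keeps it a coequalizer: $U$ preserves kernel pairs, and the resulting forks are split in $\X$ after applying $U$ once. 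The cleaner way to organise this is through the regular pushouts of \cref{sec : assumptions}, as follows.
\begin{itemize}
\item By Proposition~5.6 of~\cite{EGVdL}, $G(f)$ is a regular epimorphism and $G(P\times_CP)\rightrightarrows G(P)$ is a reflexive pair whose coequalizer is $G(f)$. This holds because any reflexive pair with a regular epi cone whose comparison to the kernel pair is a regular epimorphism is a coequalizer.
\item The functor $I$ preserves coequalizers, being a left adjoint.
\end{itemize}
The same argument, one level up, applies to the cube~\eqref{fig - definition of qi's}.

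Next I pass to the kernels. The rows of~\eqref{diag: setting} are split short exact sequences: $\partial^2_2$ and $\partial^1_1$ are split by degeneracies, and $f$ becomes split after applying $G$ by \cref{lemma - section regular pushout and projective}, since $G(P)$ is projective. Using \cref{lemma - section regular pushout and projective} on the regular pushouts, I choose the splittings compatibly, so that the vertical maps commute with the sections. The statement then reduces to a $3\times3$-type lemma for reflexive forks in a semi-abelian category. Given a morphism of exact forks whose horizontal components are split epimorphisms compatible with sections, the induced fork on kernels is again a coequalizer. To prove this I would use \cref{lemma - H0 is clear}: since the forks are reflexive (the degeneracies $\sigma$ restrict to the kernels), being a coequalizer is equivalent to $e$ being the cokernel of $\partial_1\ker(\partial_0)$. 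Normal subobjects and cokernels then behave well in split extensions, because $\Ker$ of a split epi is a retract-compatible direct-summand-like piece. Concretely, $\Ker(\underline{\partial_0})$ is the kernel of the top-left restricted map, and the cokernel computation commutes with taking kernels of the split rows.

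The main obstacle is this last step. Kernels do not commute with coequalizers in general, so the split compatibility is essential. I would first try the $3\times 3$ lemma in its semi-abelian form~\cite{Bourn2003}. The columns are exact, so the induced left column is exact provided the left vertical composite is zero and the top-left map is a regular epimorphism onto the kernel. I expect the regular-epi condition to follow from the compatible sections: an element of $\Ker(\underline{I(\G(\partial^1_0))})$ can be corrected by the section.
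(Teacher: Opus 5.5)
The step you flag as the main obstacle is the entire content of the lemma, and it is left unproved. Your general claim is that a morphism of reflexive coequalizer diagrams, whose components are split epimorphisms with compatible sections, induces a coequalizer on kernels. It is supported only by the heuristic that kernels of split epimorphisms are ``direct-summand-like'' and by an element-wise ``correction by the section''. Since kernels do not commute with coequalizers in $\D$, this needs a real argument. Bourn's $3\times 3$ lemma does not apply as stated: it concerns short exact sequences, not forks. The two facts you would have to feed into it are that $\underline{I(\G(\partial^1_0))}$ is a regular epimorphism, and that $\underline{I(\G(\partial^2_1))}$ maps $\Ker(\underline{I(\G(\partial^2_0))})$ onto $\Ker(\underline{I(\G(\partial^1_0))})$. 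These are exactly what has to be shown.

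There is also a gap upstream, in the compatible sections. The degeneracies are not compatible: $\sigma_1$ splits $\partial_2$ and $\sigma_0$ splits $\partial_1$, but $\partial_1\sigma_1=1\neq\sigma_0\partial_1$. (Incidentally, $G^{n+1}(f)$ splits because its codomain $G^{n+1}(C)$ is projective, not because $G(P)$ is.) \cref{lemma - section regular pushout and projective} gives compatibility with a single arrow. You need a section $s_2$ of $G^{n+1}(\partial^2_2)$ with $G^{n+1}(\partial^2_0)s_2=s_1G^{n+1}(\partial^1_0)$ and $G^{n+1}(\partial^2_1)s_2=s_1G^{n+1}(\partial^1_1)$ simultaneously. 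To get it, apply the lemma to the following square. Its top edge is $(G^{n+1}(\partial^2_0),G^{n+1}(\partial^2_1))\colon G^{n+1}(P\times_CP\times_CP)\to R[G^{n+1}(\partial^1_0)]$. Its bottom edge is $(G^{n+1}(\partial^1_0),G^{n+1}(\partial^1_1))\colon G^{n+1}(P\times_CP)\to R[G^{n+1}(f)]$. Its vertical sides are $G^{n+1}(\partial^2_2)$ and the map induced by $G^{n+1}(\partial^1_1)$. You then need this square to be a regular pushout, which follows from $\G$ sending the whole cube to a $3$-fold extension. In your proposal, Proposition~5.6 of~\cite{EGVdL} is used only to see that the columns are coequalizers.

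This regular pushout is the key ingredient of the paper's proof, which then sidesteps your general lemma. After applying $I$, it yields a regular epimorphism $\Ker(I(\G(\partial^2_2)))\to\Ker(\underline{I(\G(\partial^1_1))})$ through which the given pair factors. So it suffices to treat the fork built from the kernel pairs $R[\G(\partial^1_0)]$ and $R[\G(f)]$. These are level-wise split coequalizers, with splittings made compatible by \cref{lemma - section regular pushout and projective}. Split coequalizers are preserved by every functor, in particular by $I$ and then by taking kernels. Once you have simultaneously compatible sections, your general lemma can also be proved. One way is to work in the category of split epimorphisms, where such coequalizers are computed componentwise as $\Coker(\partial_1\ker\partial_0)$ and the kernel functor preserves kernels, regular epimorphisms and cokernels. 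That argument is not in your proposal.
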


\begin{proof}
	By applying $\G$ to the $3$-fold extension~\eqref{fig - definition of qi's}, we obtain a cube
	\[
	\xymatrix{
		\G(P\times_C P\times_C P)\ar[rr]^-{\G(\partial^2_2)}\ar[dd]_-{\G(\partial^2_1)} \ar[dr]^-{\G(\partial^2_0)} &&\G(P\times_C P)\ar[dr]^-{\G(\partial^1_0)} \ar[dd]_-(0.2){\G(\partial^1_1)}|\hole& \\
		&\G(P\times_C P)\ar[rr]^-(0.2){\G(\partial^1_1)}\ar[dd]_-(0.2){\G(\partial^1_0)}&& \G(P)\ar[dd]^-{\G(f)}\\
		\G(P\times_C P)\ar[rr]^-(0.2){\G(\partial^1_1)}|(0.55)\hole \ar[rd]_-{\G(\partial^1_0)} &&\G(P)\ar[dr]^-{\G(f)}& \\
		&\G(P)\ar[rr]_-{\G(f)}&&\G(C)
	}
	\]
	of simplicial resolutions which consists, level-wise, of cubes in $\C$ that are threefold extensions, as follows from Proposition 5.6. in~\cite{EGVdL}---see p~\pageref{G preserves extension}. As a result, the left-hand induced commutative square
	\[
	\xymatrix@C=5em{
		\G(P\times_C P\times_C P)\ar@{-{ >>}}[d]_-{\G(\partial^2_2)}\ar@{--{ >>}}[r]^-{\univ{\G(\partial_0^2)}{\G(\partial_1^2)}}&R[\G(\partial^1_0)]\ar@{--{ >>}}[d]\ar@<0.75ex>[r]^-{\pi_1}\ar@<-0.75ex>[r]_-{\pi_2}&\G(P\times_C P)\ar[d]^-{\G(\partial_1^1)}\\
		\G(P\times_C P)\ar@{--{ >>}}[r]_-{\univ{\G(\partial_1^1)}{\G(\partial_0^1)}}&R[\G(f)]\ar@<0.75ex>[r]^-{\pi'_1}\ar@<-0.75ex>[r]_-{\pi'_2}& \G(P)
	}
	\]
	where $(R[\G(\partial^1_0)],\pi_1,\pi_2)$ and $(R[\G(f)],\pi'_1,\pi'_2)$ are, respectively, the kernel pairs of $\G(\partial^1_0)$ and $\G(f)$, consists level-wise of regular pushouts.
	
	Since $I$ preserves regular pushouts (see \cref{rem - regular pushout are preserved by left-adjoint}), also the top right-hand parallellogram in the diagram
	\[
	\resizebox{\textwidth}{!}
	{
		\xymatrix{
			\Ker(I(\G(\partial^2_2)))\ar@{{ |>}->}[rr]\ar@<-1ex>[dd]_-(0.3){\underline{I(\G(\partial^2_0))}}\ar@<1ex>[dd]^-(0.3){\underline{I(\G(\partial^2_1))}}\ar@{-->}[dr]&&I(\G(P\times_C P\times_C P))\ar[dr]\ar[rr]^-{I(\G(\partial^2_2))}\ar@<-1ex>[dd]_-(0.2){I\G(\partial^2_0)}|\hole\ar@<1ex>[dd]^-(0.2){I(\G(\partial^2_1))}|\hole&&I(\G(P\times_C P))\ar[rd]\ar@<-1ex>[dd]_-(0.2){I(\G(\partial^2_0))}|\hole\ar@<1ex>[dd]^-(0.2){I\G(\partial^2_1)}|\hole&\\
			& \Ker(\underline{I(\G(\partial^1_1))})\ar@{{ |>}->}[rr]\ar@<-1ex>[dl]_-{\underline{I(\pi_1)}}\ar@<1ex>[dl]^-{\underline{I(\pi_2)}}&&I(R[\G(\partial^1_0)])\ar[rr]_-(0.2){\underline{I(\G(\partial^1_1))}}\ar@<-1ex>[dl]_-{I(\pi_1)}\ar@<1ex>[dl]^-{I(\pi_2)}&&I(R[\G(f)])\ar@<-1ex>[dl]_-{I(\pi'_1)}\ar@<1ex>[dl]^-{I(\pi'_2)}\\
			\Ker(I(\G(\partial^1_1)))\ar@{{ |>}->}[rr]\ar[d]_-{\underline{I(\G(\partial^1_0))}}&&I\G(P\times_C P)\ar[rr]^-{I(\G(\partial^1_1))}\ar[d]_-{I(\G(\partial^1_0))}&&I(\G(P))\ar[d]^-{I(G(f))}\\
			\Ker(I(\G(f)))\ar@{{ |>}->}[rr]&&I(\G(P))\ar[rr]_-{I(\G(f))}&&I(\G(C))
		}
	}
	\]
	in $\s(\D)$ consists level-wise of regular pushouts. From this it follows that the induced morphism between $\Ker(I(\G(\partial^2_2)))$ and $\Ker(\underline{I(\G(\partial^1_1))})$ consist level-wise of regular epimorphisms (e.g.\ \cite[Lemma 1.2.7.]{Tomasthesis}).

	Hence, to finish the proof, it will suffice to show that the fork
	\[
	\xymatrix{
		\Ker(\underline{I(\G(\partial^1_1))})\ar@<0.75ex>[rr]^-{\underline{I(\pi_1)}} \ar@<-0.75ex>[rr]_-{\underline{I(\pi_2)}}&& \Ker(I(\G(\partial^1_1)))\ar[r]^-{\underline{I(\G(\partial^1_0))}}&\Ker(I(\G(f)))
	}
	\]
	is a coequalizer diagram. Actually, we will prove that it consists level-wise of split coequalizers as being the image of a split coequalizers through the kernel functor.
	
	Consider, in $\s(\C)$, the diagram
	\[
	\xymatrix{
		R[\G(\partial^1_0)]\ar[r]\ar@<0.75ex>[d]^-{\pi_1}\ar@<-0.75ex>[d]_-{\pi_2}&R[\G(f)]\ar@<0.75ex>[d]^-{\pi'_1}\ar@<-0.75ex>[d]_-{\pi'_2}\\
		\G(P\times_C P) \ar@{-{ >>}}[r]^-{\G(\partial^1_1)} 
		\ar@<0.75ex>@{-{ >>}}[d]^-{\G(\partial^1_0)} & \G(P)\ar@<0.75ex>@{-{ >>}}[d]^-{\G(f)}\\
		\G(P)  \ar@{-{ >>}}[r]_-{\G(f)}\ar@{{ >}->}@<.75ex>[u]&\G(C)\ar@{{ >}->}@<.75ex>[u]
	}
	\]
	$\G(f)$ consists level-wise of regular epimorphisms between projective objects, hence of split epimorphisms. Since taking the kernel pair of a split epimorphism yields a split coequalizer, it follows that the right-hand vertical fork in the diagram consists level-wise of split coequalizers. Furthermore, as $G$ preserves them, the bottom square consists level-wise of regular pushouts, in which the downward pointing morphisms can be endowed---by Lemma \ref{lemma - section regular pushout and projective}---with compatible splittings. As a result, also the left-hand vertical fork consists level-wise of split coequalizers, and the whole diagram is a morphism between forks which consists level-wise of morphisms of split coequalizers. Finally, if we apply $I$ to this morphism and next take the kernel, we must necessarily obtain a coequalizer---consisting level-wise of split coequalizers---as desired.
\end{proof}

\begin{corollary}\label{cor- expression of H0KerIGf}
	With the above notation, 
	\[\H_0(\Ker(I(\G(f))))\cong \frac{\Ker(I(\partial^1_1))}{I(\partial_0^2)(\Ker(I(\partial^2_1))\cap \Ker(I(\partial^2_2)))}.\]
\end{corollary}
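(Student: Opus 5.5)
We compute $\H_0(\Ker(I(\G(f))))$ in two stages, combining \cref{lemma:IGp1 coequalizer} with \cref{lemma:split epi and H0} and \cref{lemma - H0 is clear}. The first step uses that $\H_0\colon \s(\D)\to\D$ is a left adjoint, or at least preserves coequalizers. By \cref{lemma - H0 is clear}, $\H_0$ of a simplicial object is the coequalizer of $\partial^1_0,\partial^1_1$, and colimits commute with colimits. So applying $\H_0$ to the coequalizer of \cref{lemma:IGp1 coequalizer} gives a coequalizer diagram
\[
\H_0(\Ker(I(\G(\partial^2_2))))\rightrightarrows \H_0(\Ker(I(\G(\partial^1_1))))\to \H_0(\Ker(I(\G(f)))).
\]
We then identify the first two terms. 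Both $\partial^2_2$ and $\partial^1_1$ are split epimorphisms, via degeneracies of the \v Cech resolution. Hence \cref{lemma:split epi and H0} gives $\H_0(\Ker(I(\G(\partial^2_2))))\cong\Ker(I(\partial^2_2))$ and $\H_0(\Ker(I(\G(\partial^1_1))))\cong\Ker(I(\partial^1_1))$. Under these identifications, which come from the natural isomorphism $\H_0(I(\G(X)))\cong I(X)$, the two parallel maps become the restrictions of $I(\partial^2_0)$ and $I(\partial^2_1)$ to $\Ker(I(\partial^2_2))\to\Ker(I(\partial^1_1))$.

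This leaves a coequalizer of a reflexive pair of restricted maps $\underline{I(\partial^2_0)},\underline{I(\partial^2_1)}\colon \Ker(I(\partial^2_2))\to\Ker(I(\partial^1_1))$. The common section is induced by the degeneracy $I(\sigma)$, which lands in $\Ker(I(\partial^2_2))$ by the simplicial identities. We apply \cref{lemma - H0 is clear} again, now to this fork in the semi-abelian category $\D$. The coequalizer is the cokernel of $\underline{I(\partial^2_0)}$ restricted to $\Ker(\underline{I(\partial^2_1)})$, where the indexing of the two face maps is chosen to match that lemma. Since $\Ker(\underline{I(\partial^2_1)})=\Ker(I(\partial^2_1))\cap\Ker(I(\partial^2_2))$, this cokernel is the quotient of $\Ker(I(\partial^1_1))$ by the image $I(\partial^2_0)(\Ker(I(\partial^2_1))\cap \Ker(I(\partial^2_2)))$. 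That image is normal because it is the image of the boundary in the proper Moore complex, as in the proof of \cref{proposition:Cech homology H2}. This yields the stated formula.

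The main obstacle is the naturality check in the middle step. We must verify that the isomorphisms of \cref{lemma:split epi and H0} are compatible with the morphisms induced by $\partial^2_0,\partial^2_1$. This should follow because the isomorphism is the factorization of $\H_0(\Ker(I(\G(g))))\to\H_0(I(\G(X)))\cong I(X)$, and all of these maps are natural in the split epimorphism $g$, viewed as an object of the arrow category. A secondary point is getting the indices right in \cref{lemma - H0 is clear}, which may require swapping the roles of $\partial^2_0$ and $\partial^2_1$ through the reflexive-pair symmetry; the coequalizer is unchanged by such a swap.
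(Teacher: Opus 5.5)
Your proposal is correct and follows essentially the same route as the paper. You apply $\H_0$, which is computed as a coequalizer and so commutes with the levelwise coequalizer of \cref{lemma:IGp1 coequalizer}; you then identify the first two terms via \cref{lemma:split epi and H0} and rewrite the resulting reflexive coequalizer as a cokernel via \cref{lemma - H0 is clear}. Your checks of the naturality of those identifications, of the common section $I(\sigma^1_0)$, and of the normality of the image (via properness of the Moore complex) supply details the paper leaves implicit.
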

\begin{proof}
	By~\cref{lemma - H0 is clear}, we know that all the vertical forks below determine coequalizer diagrams
	\[
	\resizebox{\textwidth}{!}{
	\xymatrix@C=5em{
	\H_0(\Ker(I(\G(\partial_2^2))))\ar@<-.75ex>[r]_-{\H_0(\underline{I(\G(\partial^2_0))})}\ar@<.75ex>[r]^-{\H_0(\underline{I(\G(\partial^2_1))})} & \H_0(\Ker(I(\G(\partial_1^1)))) \ar[r]^-{\H_0(\underline{I(\G(\partial^1_0))})} & \H_0(\Ker(I(\G(f))))\\
	\Ker(I(\G(\partial_2^2)))_0\ar@<-.75ex>[r]_-{\underline{I(\G(\partial^2_0))}_0}\ar@<.75ex>[r]^-{\underline{I(\G(\partial^2_1))}_0}\ar@{-{ >>}}[u] & \Ker(I(\G(\partial_1^1)))_0 \ar@{-{ >>}}[r]^-{\underline{I(\G(\partial^1_0))}_0}\ar@{-{ >>}}[u] & \Ker(I(\G(f)))_0\ar@{-{ >>}}[u]\\
	\Ker(I(\G(\partial_2^2)))_1\ar@<-.75ex>[r]_-{\underline{I(\G(\partial^2_0))}_1}\ar@<.75ex>[r]^-{\underline{I(\G(\partial^2_1))}_1} \ar@<.75ex>[u]^-{\partial_0^1}\ar@<-.75ex>[u]_-{\partial_1^1} & \Ker(I(\G(\partial_1^1)))_1\ar@{-{ >>}}[r]^-{\underline{I(\G(\partial^1_0))}_1} \ar@<.75ex>[u]^-{\partial_0^1}\ar@<-.75ex>[u]_-{\partial_1^1} & \Ker(I(\G(f)))_1 \ar@<.75ex>[u]^-{\partial_0^1}\ar@<-.75ex>[u]_-{\partial_1^1}
	}
	}
	\]
	On the other hand, by~\cref{lemma:IGp1 coequalizer}, the bottom and the middle forks also determine a coequalizer diagram. As a result, the top fork is a coequalizer diagram as well.
	
	Finally, since $\partial_2^2$ and $\partial_1^1$ are split epimorphisms, \cref{lemma:split epi and H0} tells us that $\H_0(\Ker(I(\G(\partial^1_1))))\cong \Ker(I(\partial^1_1))$ and $\H_0(\Ker(I(\G(\partial^2_2))))\cong \Ker(I(\partial^2_2))$. Hence, $\H_0(\Ker(I(\G(f))))=\Coeq(\underline{I(\partial^2_0)},\underline{I(\partial^2_1)})$. 
	
	\[\resizebox{\textwidth}{!}{
		\xymatrix{
			\Ker(I(\partial_1^2))\cap \Ker(I(\partial_2^2))\cong \Ker(\underline{I(\partial_1^2)}) \ar@{{ |>}->}[r]^-{\ker(\underline{I(\partial_1^2)})}\ar@{-{ >>}}[dr]  & \Ker(I(\partial_2^2)) )\ar@{-{ >>}}@<0.75ex>[r]^-{\underline{I(\partial_0^2)}} \ar@{-{ >>}}@<-0.75ex>[r]_-{\underline{I(\partial_1^2)}} &  \Ker(I(\partial_1^1)) \ar@{-{ >>}}[rr] && \H_0(\Ker(I(\G(f))))\\
			& \underline{I(\partial_0^2)}(\Ker(I(\partial_1^2))\cap \Ker(I(\partial_2^2))) \ar@{{ |>}->}[ru]&
	}}
	\]
	
	By~\cref{lemma - H0 is clear}, this last expression can be reformulated as
	\begin{align*}
		\H_0(\Ker(I(\G(f))))&=\Coker(\underline{I(\partial^2_0)} \ker(\underline{I(\partial^2_1)}))= \frac{\Ker(I(\partial^1_1))}{\underline{I(\partial_0^2)}(\Ker(I(\partial^2_1))\cap \Ker(I(\partial^2_2)))}\\
		&\cong \frac{\Ker(I(\partial^1_1))}{I(\partial_0^2)(\Ker(I(\partial^2_1))\cap \Ker(I(\partial^2_2)))}.\label{eq:H0KIGf version 1}
	\end{align*}
	where the second equality can be understood from the above diagram and the last equation follows from the proof of~\cref{proposition:Cech homology H2}.
\end{proof}

With all the above results, we are now finally ready to prove~\cref{prop - classical result to compute H0}.

First recall that since $P$ is projective, $\G(P)$ is a contractible augmented simplicial object~\cite{Barr-Beck}, hence so is $I(\G(P))$. As a result, we have that $\H_n(I(\G(P))=0$ for all $n\geq 1$ by~\cite[Proposition 3.11.]{EverVdL2}.

Then, if we consider the middle row in~\eqref{diag: setting}, we obtain the commutative diagram with exact rows:
\begin{equation}
	\resizebox{\textwidth}{!}{
	\xymatrix{
		\cdots & 0\cong\H_1(I(\G(P)))\ar[r] \ar[d] & \H_0(\Ker(I(\G(\partial_1^1))))\cong \Ker(I(\partial_1^1))\ar@{{ |>}->}[r] \ar[d]|-{\H_0(\underline{I(\G(\partial_0^1))})\cong \coker(\underline{I(\partial^2_0)} \ker(\underline{I(\partial^2_1)}))}  & \H_0(I(\G(P\times_C P)))\cong I(P\times_C P) \ar@{-{ >>}}[r] \ar[d]|-{I(\partial_0^1)}& \H_0(I(\G(P)))\cong I(P)\ar@{-{ >>}}[d]_-{I(f)}\ar[r]& 0\\
		0\cong\H_1(I(\G(P)))\ar[r]& \H_1(I(\G(C)))\ar[r]^-{\delta}& \H_0(\Ker(I(\G(f))))\ar[r]^-{\H_0(\ker(I(\G(f))))}\ar@{-{ >>}}[d]_-{q}& \H_0(I(\G(P)))\cong I(P)\ar@{-{ >>}}[r]_-{I(f)}& \H_0(I(\G(C)))\cong I(C)\ar[r] & 0\\
		&&\Ker(I(f))\ar@{{ |>}->}[ru]_-{\ker(I(f))}&&
	}
	}\label{fig - long exact sequence in homology}
\end{equation}
By~\cite[Proposition 6.5]{EverVdL2} we have isomorphisms $\H_0(I(\G(P)))\cong I(P)$, $\H_0(I(\G(C)))\cong I(C)$ and $\H_0(I(\G(P\times_C P)))\cong I(P\times_C P)$. The isomorphism $\H_0(\underline{I(\G(\partial_0^1))})\cong \coker(\underline{I(\partial^2_0)} \ker(\underline{I(\partial^2_1)}))$ follows from the proof of~\cref{cor- expression of H0KerIGf}.
	
By the exactness of the bottom sequence, we have a short exact sequence 
	\[
	\xymatrix{
		0\ar[r]&\H_1(I(\G(C)))\ar@{{ |>}->}[r]^-{\delta} & \H_0(\Ker(I(\G(f))))\ar@{-{ >>}}[r]^-{q} & \Ker(I(f))\ar[r]& 0
	}
\]
in $\D$, which fits into the commutative diagram with exact rows.
\begin{equation}
	\resizebox{\textwidth}{!}{
	\xymatrix{
	I(\partial_0^2)(\Ker(I(\partial^2_1)\cap \Ker(I(\partial^2_2))))\ar@{=}[r]\ar@{{ >}->}[d]&I(\partial_0^2)(\Ker(I(\partial^2_1)\cap \Ker(I(\partial^2_2))))\ar@{-{ >>}}[r]\ar@{{ |>}->}[d]&0\ar@{{ |>}->}[d]\\
		\Ker(I(\partial^1_0))\cap \Ker(I(\partial^1_1))\cong \Ker(\underline{I(\partial^1_0)})\ar@{{ |>}->}[r]\ar@{-->}[d]& \Ker(I(\partial^1_1))\ar@{-{ >>}}[d]|-{\coker(\underline{I(\partial^2_0)} \ker(\underline{I(\partial^2_1)}))}\ar@{-{ >>}}[r]^-{\underline{I(\partial^1_0)}}&\Ker(I(f))\ar@{=}[d]\\
		\H_1(I(\G(C)))\ar@{{ |>}->}[r]_-{\delta} & \H_0(\Ker(I(\G(f))))\ar@{-{ >>}}[r]_-{q} & \Ker(I(f))
	}
	}\label{diag - 3x3 situation for the conclusion}
\end{equation}
Note that the commutativity of the top right-hand square follows from the simplicial identities and that of the bottom right-hand square from the commutativity of diagram~\eqref{fig - long exact sequence in homology}.

From the proof of~\cref{cor- expression of H0KerIGf}, we know that the middle vertical sequence is a short exact sequence. Hence, by the $(3\times 3)$-Lemma \cite[Theorem 12]{Bourn2001}, the left-hand vertical sequence, in the above diagram, is a short exact sequence and therefore, we have our conclusion
\[
	\H_{2}(C,I)_{\G}=\H_1(I(\G(C)))\cong \frac{\Ker(I(\partial^1_0))\cap \Ker(I(\partial^1_1))}{I(\partial_0^2)(\Ker(I(\partial^2_1))\cap \Ker(I(\partial^2_2)))}.
\]

\section{Non-additive left derived functors via chain resolutions}\label{sec - non additive derived functors and Cech resolution}

The previous sections concern derived functors of any functor using simplicial methods. There exists also a notion of derived functors for additive functors via chain methods (see e.g.\ \cite{Weibel}). In~\cite{MC-FR-TVdL}, the authors explore the concept of left-derived functors for functors between non-abelian categories, using chain resolutions. They provide clear conditions which are automatically satisfied if we consider additive functors from an abelian category with enough projectives into another abelian category. Moreover, they compare their approach with classical derived functors computed via simplicial methods and they obtain:
\begin{theorem}~\cite[Theorem 11.4.]{MC-FR-TVdL}
	Let $\C$ be a semi-abelian variety for which Condition \P\ holds, with comonad $\G$ induced by the forgetful adjunction to $\Set$. Let $\E$ be a semi-abelian category and $F\colon \C\to \E$ a protoadditive functor~\cite{Everaert-Gran-TT,EG-honfg} which preserves proper morphisms (morphisms with cokernel-kernel factorization) and binary coproducts. Then $\Left_n(F)$ is naturally isomorphic to $\H_{n+1}(-,F)_{\G}$, the $(n+1)$th simplicially derived functor of~$F$ in the sense of Barr--Beck~\cite{Barr-Beck,EverVdL2}.\noproof
\end{theorem}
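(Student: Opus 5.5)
This statement is quoted from \cite[Theorem 11.4.]{MC-FR-TVdL}, and within the present paper it is recalled rather than re-proved. I therefore only sketch how I would reconstruct the argument. The natural strategy is to compare two homology theories attached to $F$. The first is the chain-resolution left derived functor $\Left_n(F)$, obtained by choosing a projective chain resolution of $C$ with proper differentials, applying $F$, and taking homology. The second is the Barr--Beck homology $\H_{n+1}(C,F)_\G=\H_n(\N(F(\G(C))))$.

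The key device would be the Moore normalisation of the canonical simplicial resolution $\G(C)$ in the semi-abelian variety $\C$, which is a proper chain complex by \cite{EverVdL2}. I would first check that, under Condition \P, $\N(\G(C))$ is an admissible chain resolution of $C$ in the sense of \cite{MC-FR-TVdL}. This has three parts.
\begin{itemize}
\item Each $\N_n(\G(C))$ should be projective. Condition \P\ is exactly what guarantees that the normalised objects of a free simplicial object remain projective, for instance as retracts of the free objects $G^{n+1}(C)$.
\item The complex should be exact in positive degrees, since $\G(C)$ is a projective simplicial resolution.
\item Its $\H_0$ should be $C$.
\end{itemize}
Together these make $\N(\G(C))$ a projective chain resolution, so $\Left_n(F)(C)\cong \H_n(F(\N(\G(C))))$.

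The second step is to commute $F$ with the Moore normalisation, that is, to produce an isomorphism $F(\N(\G(C)))\cong \N(F(\G(C)))$ in positive degrees. Here the hypotheses on $F$ enter.
\begin{itemize}
\item Protoadditivity means that $F$ preserves split short exact sequences. The normalised object $\N_n(\mathbb{A})$ is obtained by iterating kernels of split epimorphisms, namely the faces $\partial_l$ with their degeneracy sections. So $F$ carries these kernels to the corresponding kernels, giving $F(\N_n)\cong \N_n(F(\mathbb{A}))$.
\item Preservation of proper morphisms ensures that $F$ applied to the complex still yields a proper complex, whose homology is computed by the cokernel--kernel factorisations.
\item Preservation of binary coproducts is needed to identify $F(G^{n+1}(C))$-level data compatibly, so that the degreewise isomorphisms are natural and commute with the differentials induced by $\partial_0$.
\end{itemize}
Taking homology then gives $\H_n(F(\N(\G(C))))\cong \H_n(\N(F\G(C)))=\H_{n+1}(C,F)_\G$, with the dimension shift coming from the convention in the definition of Barr--Beck homology. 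The $n=0$ case should be handled separately, matching $\Left_0(F)$ with $\H_1$ through the coequaliser description in Proposition~\ref{lemma - H0 is clear}.

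Naturality in $C$ would follow from the functoriality of $\G(-)$ and $\N$, together with the independence of $\Left_n(F)$ from the chosen chain resolution, which is the comparison theorem of \cite{MC-FR-TVdL}.

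I expect the main obstacle to be the first step: verifying that the normalised objects $\N_n(\G(C))$ are projective and that the chain resolution is admissible in the non-additive sense. In a non-abelian setting a kernel of a split epimorphism between free algebras need not be projective, and Condition \P\ must be invoked precisely to rule this out.
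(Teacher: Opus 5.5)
The paper gives no proof of this theorem. It is quoted from \cite[Theorem 11.4]{MC-FR-TVdL}, and the paper only remarks that the argument there uses the other Moore normalisation (kernels of $\partial_0,\dots,\partial_{n-1}$, differential induced by $\partial_n$), and that both conventions give isomorphic derived functors. Your skeleton is the natural one and is sound: $\N(\G(C))$ is a proper projective resolution of $C$, and protoadditivity gives $F(\N(\G(C)))\cong\N(F(\G(C)))$. Because you work directly with $\N_n=\bigcap_{l=1}^n\Ker\partial_l$, you do not need that comparison of conventions. In this convention $\N_n$ is also an iterated kernel of split epimorphisms. For $j=n,\dots,1$, the simplicial identities show that $\partial_j$ restricts to a split epimorphism $\bigcap_{l=j+1}^{n}\Ker\partial_l\to\bigcap_{l=j}^{n-1}\Ker\partial_l$, split by the restriction of $\sigma_{j-1}$ (empty intersections being the whole object). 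Its kernel is the next stage, and the last stage is $\N_n$.

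Two points need correcting.

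First, $\N_n(\G(C))$ is \emph{not} in general a retract of $G^{n+1}(C)$. In a non-additive category the kernel of a split epimorphism need not be a retract of its domain, even between free objects. For example, the kernel of the split epimorphism from the free group on $\{x,y\}$ onto the free group on $\{y\}$ killing $x$ is not finitely generated. So it cannot be a retract of a finitely generated group. If such kernels were retracts, Condition \P\ would be superfluous. The correct argument applies \P\ once at each stage of the iterated kernel above, starting from the projective object $G^{n+1}(C)$.

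Second, you assign the other hypotheses on $F$ roles they do not play here.
\begin{itemize}
\item The isomorphism $F(\N(\G(C)))\cong\N(F(\G(C)))$ uses protoadditivity alone. This includes compatibility with the differentials, which are restrictions of $\partial_0$ along kernel inclusions that $F$ preserves.
\item Properness of $\N(F(\G(C)))$ is automatic, since it is the Moore complex of a simplicial object in the semi-abelian category $\E$ \cite{EverVdL2}.
\item Binary coproducts play no part in the identification.
\item The case $n=0$ needs no separate treatment.
\end{itemize}
In your argument, preservation of proper morphisms and of binary coproducts can only be used inside the step you treat as a black box. That step is $\Left_n(F)(C)\cong\H_n(F(\N(\G(C))))$ naturally in $C$, i.e.\ the independence of $\Left_n(F)$ from the chosen resolution and its functoriality, as established in \cite{MC-FR-TVdL}. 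Your proof is complete only relative to that result, and you should state explicitly that this is where those hypotheses are needed.
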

In the above statement, $\Left_n(F)\coloneq \H_n(F(C(X)))$ where $C(X)$ is a chosen projective resolution of an object $X\in \C$ and, given a semi-abelian category $\C$, Condition \P\ can be reformulated as follows: given any split short exact sequence
\[
	\xymatrix{0 \ar[r] & K \ar@{{ |>}->}[r]^-k & X \ar@{-{ >>}}@<.5ex>[r]^-{f} & Y \ar[r] \ar@{{ >}->}@<.5ex>[l]^-{s} & 0}
\]
in $\C$, $K$ is projective as soon as $X$ is projective. For instance, any Schreier variety trivially satisfies \P~\cite[Example 3.2.6.]{MC-FR-TVdL}) as well as any additive category with finite limits~\cite[Example 3.2.5]{MC-FR-TVdL}.

It is important to mention that the proof of this last theorem is based on another and equivalent definition of Moore complexes: let $\mathbb{A}$ be a (semi-)simplicial object in a pointed category with pullbacks $\C$ and $n\geq 1$. From $\left(\partial_i^n\colon A_n\to A_{n-1}\right)_{0\leq i\leq n}$, we can either, as done in this article, take $\left(\partial_i^n\colon A_n\to A_{n-1}\right)_{1\leq i\leq n}$ to define $\N_n(\mathbb{A})$ and where $d_n^{\N(\mathbb{A})}$ is the restriction of $\partial_0^n$ to the kernels. Or, as done in~\cite{MC-FR-TVdL}, we can take $\left(\partial_i^n\colon A_n\to A_{n-1}\right)_{0\leq i\leq n-1}$ to define $\N_n(\mathbb{A})$ and the restriction of $\partial_n^n$ to the kernels will determine $d_n^{\N(\mathbb{A})}$. Actually, we can prove that the two definitions will induced isomorphic derived functors.

Therefore, combining the foregoing theorem with~\cref{thm - theorem Barr Beck n=2 same as Cech n=1}, we can state that: 
\begin{theorem}\label{them - Cech derived functor and non-additive left derived functors}
	Let $\C$ be a semi-abelian variety for which Condition \P\ holds, with comonad $\G$ induced by the forgetful adjunction to $\Set$. Let $\D$ be a subvariety of $\C$ and $I\colon \C\to \D$ the associated reflector. If $I$ is a protoadditive functor which preserves proper morphisms then $\Le_1(I)$ is naturally isomorphic to the non-additive left derived functor $\Left_1(I)$ in the sense of~\cite{MC-FR-TVdL}.\noproof
\end{theorem}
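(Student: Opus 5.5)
The statement is obtained by chaining two natural isomorphisms. The first is the main result, \cref{thm - theorem Barr Beck n=2 same as Cech n=1}, which gives $\Le_1(I)(C)\cong \H_2(C,I)_\G$. The second is \cite[Theorem 11.4.]{MC-FR-TVdL} with $n=1$ and $F=I$, which gives $\H_2(-,I)_\G\cong \Left_1(I)$. The plan is to check that the hypotheses of both results hold in the present setting and then compose the isomorphisms.

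The first step is to verify the assumptions of \cref{sec : assumptions}. Since $\C$ is a semi-abelian variety, it is monadic over $\Set$. The forgetful adjunction $F\dashv U$ therefore satisfies Quillen's conditions: $\varepsilon_C$ is a regular epimorphism by Beck's theorem, and $FU(C)$ is free, hence projective. A subvariety $\D$ is a Birkhoff subcategory, so $I$ is its reflector and \cref{thm - theorem Barr Beck n=2 same as Cech n=1} applies to every $C$. The category $\C$ has enough projectives, namely the free objects, so $\Le_1(I)$ is well defined as a functor up to isomorphism.

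The second step is to verify the hypotheses of \cite[Theorem 11.4.]{MC-FR-TVdL} for $F=I\colon\C\to\D$. The category $\D$ is a subvariety of a semi-abelian variety, so it is itself semi-abelian. Protoadditivity of $I$ and preservation of proper morphisms are assumed. Preservation of binary coproducts holds automatically, because $I$ is a left adjoint. Condition \P\ is assumed, and $\G$ is the comonad of the forgetful adjunction to $\Set$, as that theorem requires. This yields $\Left_1(I)\cong\H_2(-,I)_\G$ naturally. The paper also notes that the two Moore-complex conventions yield isomorphic derived functors, so the dimension conventions of the two sources agree.

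The main obstacle is \emph{naturality}, since \cref{thm - theorem Barr Beck n=2 same as Cech n=1} is stated only objectwise. I would handle it as follows.
\begin{itemize}
\item Given $x\colon C\to C'$, I lift it to a morphism of \v Cech resolutions by \cref{prop- Cech resolution unique up to homtopy}. I also lift it to a morphism $\G(x)$ of the canonical resolutions, and the two lifts are compatible with the projective presentations $f,g$.
\item Every ingredient of the proof of \cref{prop: Barr-Beck homology H2} is functorial in the presentation $f\colon P\to C$: the diagram \eqref{diag: setting}, the connecting morphism $\delta$ of the long exact sequence, and the cokernel comparisons of \cref{cor- expression of H0KerIGf}. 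So is the description in \cref{proposition:Cech homology H2}.
\item Hence the induced maps on the common quotient formula
\[
\frac{\Ker(I(\partial^1_0))\cap \Ker(I(\partial^1_1))}{I(\partial_0^2)(\Ker(I(\partial^2_1))\cap \Ker(I(\partial^2_2)))}
\]
agree on both sides.
\end{itemize}
The one delicate point is independence of the chosen lift. Homotopic lifts induce the same map on $\H_1$ by the homotopy lemma, so the chosen lift does not matter. Composing the two natural isomorphisms then gives $\Le_1(I)\cong\Left_1(I)$.
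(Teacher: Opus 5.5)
Your proposal is correct and follows the paper's argument: the paper obtains the statement by composing the main theorem, $\H_2(C,I)_\G\cong\Le_1(I)(C)$, with \cite[Theorem 11.4.]{MC-FR-TVdL} at $n=1$. Your hypothesis checks (for instance, that $I$ preserves binary coproducts because it is a left adjoint) and your naturality argument via lifts of \v Cech resolutions only spell out what the paper leaves implicit, since the paper states its main theorem objectwise.
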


A concrete application of this new theorem is given by the connected components functor $\pi_0\colon \XMod(\V)\to \V$ where $\V$ is a semi-abelian variety satisfying \P\ and where $\XMod(\V)$ denotes the category of internal crossed modules in $\V$~\cite{Janelidze} (see~\cite{MC} for further details):

\begin{corollary}\label{cor - Cech derived functor pi0}
	With the above notations and assumptions, $\Le_1\left(\pi_0\right)$ is naturally isomorphic to the non-additive left derived functor $\Left_1\left(\pi_0\right)$.\noproof
\end{corollary}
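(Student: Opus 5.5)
The plan is to obtain \cref{cor - Cech derived functor pi0} as a direct instance of \cref{them - Cech derived functor and non-additive left derived functors}, with $\C=\XMod(\V)$, $\D=\V$ and $I=\pi_0$. So the proof reduces to checking the hypotheses of that theorem in this setting. I would establish the following facts in turn.

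\begin{enumerate}
\item $\XMod(\V)$ is a semi-abelian variety. Internal crossed modules in a semi-abelian variety are equivalent to internal categories (or internal reflexive graphs with the Smith--Pedicchio commutator condition) in $\V$, and these form a variety of $\Omega$-algebras with a single constant and the Bourn--Janelidze semi-abelian terms inherited componentwise from $\V$. The comonad is then the one induced by the forgetful adjunction to $\Set$.
\item $\V$, viewed via $\pi_0$, is a subvariety (Birkhoff subcategory) of $\XMod(\V)$ with reflector $\pi_0$. I would identify $\V$ with the crossed modules $0\to B$ (equivalently, the discrete internal categories). The left adjoint to this inclusion sends $\partial\colon T\to G$ to $\Coker(\partial)=G/\partial(T)$, which is precisely $\pi_0$. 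Closure under subobjects, quotients and products is clear, since the subcategory is cut out by the identity ``$T=0$''.
\item $\XMod(\V)$ satisfies Condition \P. Take a split short exact sequence of crossed modules with projective middle term. Projectives in $\XMod(\V)$ are retracts of free crossed modules, and the kernel is computed componentwise. I would use that a free crossed module on a set is built from free objects of $\V$, and combine this with \P\ in $\V$ applied on each component.
\item $\pi_0$ is protoadditive and preserves proper morphisms. These are the facts established in~\cite{MC}: protoadditivity holds because $\pi_0$ sends split short exact sequences to split short exact sequences, and preservation of proper morphisms follows from the explicit kernel and cokernel description of crossed modules.
\end{enumerate}

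Once these four facts are in place, \cref{them - Cech derived functor and non-additive left derived functors} yields $\Le_1(\pi_0)\cong\Left_1(\pi_0)$ naturally.

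The main obstacle is step (3), Condition \P\ for $\XMod(\V)$. Projectivity does not obviously pass componentwise, because the top component of a free crossed module is a free object over a semidirect or coproduct-type construction. I would try to use the equivalence with split extensions or internal categories, under which a free crossed module corresponds to free objects of $\V$. If this gets messy, the fallback is to rely on~\cite{MC}, where these properties of $\pi_0$ and of $\XMod(\V)$ are treated, since the statement's ``with the above notations and assumptions'' already presupposes that setting.
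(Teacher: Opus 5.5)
Your proposal follows the paper: the corollary is obtained, with no further argument, as the instance $\C=\XMod(\V)$, $\D=\V$, $I=\pi_0$ of \cref{them - Cech derived functor and non-additive left derived functors}, and the hypotheses ($\XMod(\V)$ a semi-abelian variety satisfying \P, and $\pi_0$ protoadditive and preserving proper morphisms) are taken from~\cite{MC}. One caution on your step (3): the componentwise sketch does not work, because projectivity in $\XMod(\V)$ is not detected on components (for groups, free crossed modules and their retracts have injective boundary, so the crossed module $\mathbb{Z}\to 0$ is not projective although both of its components are free), so the appeal to~\cite{MC} there is genuinely needed and not just a fallback.
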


As explained in~\cite[Section 5.2.]{MC}, the foregoing corollary can be extended as follows: starting with a semi-abelian variety $\V$ satisfying \P, we have the iterated adjunctions
\[
\xymatrix{
	\V \ar@<-1.1ex>[r]_-{D}^-{\perp}& \XMod(\V) \ar@<-1.1ex>[r]_-{D}^-{\perp} \ar@<-1.1ex>[l]_-{\pi_{0,\V}} & \XMod^2(\V) \ar@<-1.1ex>[l]_-{\pi_{0,\XMod(\V)}}
}
\]
where $\XMod^2(\V)\coloneq \XMod(\XMod(\V))$. We write $\pi_{0,\V}^2\coloneq \pi_{0,\V}\pi_{0,\XMod(\V)}$ for the composed reflector and inductively define $\pi_{0,\V}^n\coloneq \pi_{0,\V}\pi_{0,\V}^{n-1}$ for $n\geq 1$:
\begin{corollary}\label{cor - Cech derived functor pi0 comp}
	With the above notations and assumptions, let $m,n\in \NN$ such that $m\geq 1$. If we consider the compositions
	\[
	\pi_{0,\XMod^n(\V)}^m\colon \XMod^{m+n}(\V)\to \XMod^{n}(\V)
	\]
	of $\pi_0$, then $\Le_1\left(\pi_{0,\XMod^n(\V)}^m\right)$ is naturally isomorphic to the non-additive left derived functor $\Left_1\left(\pi_{0,\XMod^n(\V)}^m\right)$.\noproof
\end{corollary}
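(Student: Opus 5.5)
The statement to prove is Corollary~\ref{cor - Cech derived functor pi0 comp}. The plan is to reduce it to Theorem~\ref{them - Cech derived functor and non-additive left derived functors}, applied with base variety $\XMod^{n}(\V)$ and reflector $\pi^m_{0,\XMod^n(\V)}$. Concretely, we put $\C=\XMod^{m+n}(\V)$, $\D=\XMod^n(\V)$ and $I=\pi^m_{0,\XMod^n(\V)}$. It then suffices to verify the following hypotheses:
\begin{enumerate}
\item $\C$ is a semi-abelian variety satisfying Condition \P;
\item $\D$ is (equivalent to) a subvariety of $\C$ with reflector $I$;
\item $I$ is protoadditive and preserves proper morphisms.
\end{enumerate}
Once these hold, the theorem gives $\Le_1(I)\cong\Left_1(I)$, which is exactly the claim.

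For (1), I would argue by induction on $k$ that $\XMod^k(\V)$ is a semi-abelian variety satisfying \P. Internal crossed modules in a semi-abelian variety again form a semi-abelian variety, since they are equivalent to internal groupoids, or equivalently to internal categories, which are varietal. Preservation of \P\ from $\V$ to $\XMod(\V)$ is established in~\cite[Section 5.2]{MC}; I would cite it there. If needed, I would check it directly: a split short exact sequence of crossed modules is levelwise split exact, and projectives in $\XMod(\V)$ correspond levelwise to projectives in $\V$ (up to retracts of free objects). This makes \P\ levelwise in $\V$.

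For (2), the functor $D$ is fully faithful, and its image is closed under subobjects, quotients and products. Hence $\XMod^n(\V)$ is a Birkhoff subcategory of $\XMod^{n+1}(\V)$. Composites of Birkhoff inclusions are Birkhoff, so $\XMod^n(\V)$ is a subvariety of $\XMod^{m+n}(\V)$. Its reflector is the composite of the $\pi_0$'s, which is $I$ by definition.

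For (3), I would first prove the case $m=1$ and then show that both properties are stable under composition. For $m=1$, $\pi_0$ is protoadditive and preserves proper morphisms by~\cite{MC}; this is the input already used for Corollary~\ref{cor - Cech derived functor pi0}. Composites of protoadditive functors are protoadditive, since each preserves split short exact sequences. Similarly, composites of functors preserving proper morphisms again preserve them. Induction on $m$ then finishes the argument.

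The main obstacle I expect is the inheritance of Condition \P\ along $\XMod$, since it involves describing projective crossed modules. I would rely on~\cite[Section 5.2]{MC} for this step rather than redo it.
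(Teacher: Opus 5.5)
Your proposal is correct and is essentially the argument the paper has in mind (the paper gives no written proof). It applies \cref{them - Cech derived functor and non-additive left derived functors} to the composite reflector $\pi^m_{0,\XMod^n(\V)}\colon\XMod^{m+n}(\V)\to\XMod^n(\V)$, takes from~\cite[Section 5.2]{MC} that each $\XMod^k(\V)$ is a semi-abelian variety satisfying \P\ and that $\pi_0$ is protoadditive and preserves proper morphisms, and treats the Birkhoff and composition checks as routine.

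However, drop your fallback ``direct check'' of \P: projectivity in $\XMod(\V)$ is not detected levelwise, so \P\ in $\V$ applied levelwise only makes the kernel levelwise projective. For a counterexample over $\V=\mathsf{Grp}$, take the crossed module $0\colon\mathbb{Z}\to\mathbb{Z}$ with trivial action, which is levelwise free. A morphism out of it must send the top generator into the kernel of the target's boundary. So its identity does not lift along the levelwise surjection $(\mathrm{id},\mathrm{pr}_2)$ from $(1,0)\colon\mathbb{Z}\to\mathbb{Z}^2$, whose boundary has zero kernel, and it is not projective.
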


\section*{Acknowledgments}
Many thanks to Tim Van~der~Linden who brought us together to collaborate on this work and also for his feedback on the early drafts of the article.
	

\end{document}